\tikzset{->-/.style={decoration={markings,mark=at position #1 with {\arrow{>}}},postaction={decorate}}}
\newtheorem{thm}{Theorem}[section]
\newtheorem{cor}[thm]{Corollary}
\newtheorem{lem}[thm]{Lemma}
\newtheorem{defn}[thm]{Definition}
\newtheorem*{thmA}{Theorem A}
\newtheorem*{thmB}{Theorem B}
\newtheorem*{conj}{\bf{Conjecture}}
\newcommand{\N}{\mathbb{N}}
\newcommand{\Z}{\mathbb{Z}}
\newcommand{\tM}{\mathbf{M}}
\newcommand{\tN}{\mathbf{N}}
\newcommand{\tS}{\mathbf{S}}
\newcommand{\tT}{\mathbf{T}}
\DeclareMathOperator{\ext}{ext}
\numberwithin{equation}{section}
\def\pn{\par\noindent}
\begin{document}

\title[Conciseness on normal subgroups and new concise words]{Conciseness on normal subgroups and new concise words from lower central and derived words}

\author{Gustavo A.\ Fern\'andez-Alcober}
\address{{Department of Mathematics}, {University of the Basque Country UPV/EHU,} {Bilbao, Spain}}
\curraddr{}
\email{gustavo.fernandez@ehu.eus}
\thanks{}

\author{Matteo Pintonello}
\address{{Department of Mathematics}, {University of the Basque Country UPV/EHU,} {Bilbao, Spain}}
\curraddr{}
\email{matteo.pintonello@ehu.eus}
\thanks{}

\subjclass[2010]{Primary 20F10; Secondary: 20F14.}

\keywords{Word values, verbal subgroup, concise word.}

\date{}

\dedicatory{}

\begin{abstract}
Let $w=w(x_1,\ldots,x_r)$ be a lower central word or a derived word.
We show that the word $w(u_1,\ldots,u_r)$ is concise whenever $u_1,\ldots,u_r$ are non-commutator words in disjoint sets of variables, thus proving a generalized version of a conjecture of Azevedo and Shumyatsky.
This applies in particular to words of the form $w(x_1^{n_1},\ldots,x_r^{n_r})$, where the $n_i$ are non-zero integers.
Our approach is via the study of values of $w$ on normal subgroups, and in this setting we obtain the following result: if $N_1,\ldots,N_r$ are normal subgroups of a group $G$ and the set of all
values $w(g_1,\ldots,g_r)$ with $g_i\in N_i$ is finite then also the subgroup generated by these values, i.e.\ $w(N_1,\ldots,N_r)$, is finite.
\end{abstract}

\maketitle

\section{\bf Introduction}

\vskip 0.4 true cm

Let $w=w(x_1,\ldots,x_r)$ be a group word, which we identify with an element of the free group $F=F(X)$ of countable rank on the set
$X=\{x_n\}_{n\in\N}$.
For every group $G$ and every $r$-tuple $\mathbf{g}=(g_1,\ldots,g_r)$ of elements of $G$, we can consider the element $w(\mathbf{g})$ resulting from replacing each $x_i$ with $g_i$.
We thus obtain the {\em set of word values} of $w$ in $G$,
\[
w\{G\} = \{ w(\mathbf{g}) \mid \mathbf{g} \in G\times \overset{r}{\cdots} \times G \}.
\]
The subgroup generated by $w\{G\}$ is called the {\em verbal subgroup} of $w$ in $G$, and is denoted by $w(G)$.
More generally, if $\tS=(S_1,\ldots,S_r)$ is an $r$-tuple of subsets of $G$, we can consider the set of values
\[
w\{\tS\} = \{ w(\mathbf{g}) \mid \mathbf{g} \in S_1\times \cdots \times S_r \},
\]
and the corresponding verbal subgroup on $\tS$, namely
$w(\tS)=\langle w\{\tS\} \rangle$.
Of special interest is the case when $\tS$ is a tuple $\tN=(N_1,\ldots,N_r)$ of normal subgroups of $G$.
We then say that $w(\tN)$ is the {\em $\tN$-verbal subgroup} of $w$ and that it is a verbal subgroup on normal subgroups.

If a word $w$ satisfies the property that, whenever the set of values $w\{G\}$ is finite in a group $G$, then the verbal subgroup $w(G)$ is also finite, we say that $w$ is a {\em concise} word.
Philip Hall conjectured that all words are concise.
This is one of his three famous problems on verbal subgroups; see Section 4.2 in Robinson's book
\cite{Ro} or Section 1.4 in Segal's book \cite{Se}.
In the attempt to settle this conjecture, the following types of words were proved to be concise:
\begin{enumerate}
\item
Non-commutator words, that is, words lying outside $F'$.
Equivalently, these are the words for which the exponent sum of some variable $x_i$ is non-zero.
(P.\ Hall, unpublished.)
\item
Lower commutator words $\gamma_r$, defined by
\[
\gamma_r(x_1,\ldots,x_r) = [x_1,\ldots,x_r],
\]
and whose corresponding verbal subgroup is the $r$th term of the lower central series.
(P.\ Hall, unpublished.)
\item
Derived words $\delta_k$, defined recursively by $\delta_0=x_1$ and
\[
\delta_k(x_1,\ldots,x_{2^k})
=
[\delta_{k-1}(x_1,\ldots,x_{2^{k-1}}),\delta_{k-1}(x_{2^{k-1}+1},\ldots,x_{2^k})].
\]
(Turner-Smith \cite{TS}.) 
The verbal subgroup of $\delta_k$ is the $k$th derived subgroup of the group.
\item
More generally, outer commutator words, also known as multilinear words.
(Wilson \cite{Wi}.)
These are words obtained by nesting commutators of variables arbitrarily, but never repeating a variable.
For example, $\big[[x_1,x_2,x_3],[[x_4,x_5],[x_6,x_7]]\big]$ is an outer commutator.
\end{enumerate}
For convenience, a single variable is also considered to be an outer commutator word.
Obviously, lower central words and derived words are outer commutator words.

However, Ivanov \cite{Iv} refuted P. Hall's conjecture in 1989, showing that the word
$[[x^{pn},y^{pn}]^n,y^{pn}]^n$ is not concise for big enough $n$ and $p$, where $n$ is odd and $p$ is a prime.
Another counterexample can be found in Theorem 39.7 of Olshanskii's book \cite{Ol}.

It was not until 2011 when new examples of concise words were discovered.
In two independent papers, Abdollahi and Russo \cite{Ab-Ru}, and Fern\'andez-Alcober, Morigi and Traustason \cite{Fe-Mo-Tr} proved that the Engel word
\[
e_n(x,y)=[x,y,\overset{n}{\ldots},y]
\]
is concise for $n=2$, $3$ and $4$.
To date, it is not known whether $e_n$ is concise for $n\ge 5$.
The topic of conciseness experienced a revival in the last years in the works of Shumyatsky and his collaborators.
On the one hand, they have studied some variations of conciseness in profinite groups, and on the other hand, they have proved that the following families of words are concise in all groups:
\begin{enumerate}
\setcounter{enumi}{4}
\item
If $u_1$ and $u_2$ are non-commutator words in disjoint sets of variables, then $[u_1,u_2]$ is concise.
(Delizia, Shumyatsky, Tortora, and Tota \cite{De-Sh-To-To}.)
The same happens with commutators $[u_1,u_2,u_3]$ of non-commutators.
(Azevedo, Shumyatsky \cite{Az-Sh}.)
\item
If $u=u(x_1,\ldots,x_k)$ is a non-commutator word and we produce copies of $u$ in disjoint variables by setting $u_i=u(x_{i1},\ldots,x_{ik})$ for $i=1,\ldots,r$, then $[u_1,\ldots,u_r]$ and $[v,u_1,\ldots,u_r]$ are concise, where $v$ is another non-commutator, in a set of variables disjoint from those of $u_1,\ldots,u_r$.
(Azevedo, Shumyatsky \cite{Az-Sh}.)
\item
If $u$ is an outer commutator word and $v$ is a non-commutator in disjoint variables, then $[u,v]$ is concise.
(Azevedo, Shumyatsky \cite{Az-Sh}.)
\end{enumerate}
In the light of (v) and (vi) above, Azevedo and Shumyatsky made the following conjecture in \cite{Az-Sh}.

\begin{conj}[Azevedo, Shumyatsky]
Let $u_1,\ldots,u_r$ be non-commutator words in disjoint sets of variables.
Then the word
\[
\gamma_r(u_1,\ldots,u_r) = [u_1,\ldots,u_r]
\]
is concise.
\end{conj}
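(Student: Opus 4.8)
The plan is to prove the conjecture by deriving it from a conciseness statement on \emph{normal subgroups}: namely that if $N_1,\dots,N_r$ are normal subgroups of a group $G$ for which the set $\gamma_r\{(N_1,\dots,N_r)\}=\{[a_1,\dots,a_r]\mid a_i\in N_i\}$ is finite, then $[N_1,\dots,N_r]$ is finite (with an analogous statement for the derived words $\delta_k$, which in the same way would yield conciseness of $\delta_k(u_1,\dots,u_{2^k})$ for non-commutators). Assuming this, the reduction runs as follows. Let $G$ be a group with $\gamma_r(u_1,\dots,u_r)\{G\}$ finite, and set $N_i=u_i(G)$, a normal subgroup of $G$ generated by the conjugation-closed set $u_i\{G\}$. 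Since the $u_i$ involve pairwise disjoint variables, $u_i(\mathbf a_i)$ runs independently through $u_i\{G\}$ as $\mathbf a_i$ varies, so $\gamma_r(u_1,\dots,u_r)\{G\}=\gamma_r\{(u_1\{G\},\dots,u_r\{G\})\}$; and a standard commutator collection, using that each $u_i\{G\}$ is $G$-invariant, shows that the subgroup generated by these values is exactly $[N_1,\dots,N_r]$, i.e.\ $\gamma_r(u_1,\dots,u_r)(G)=[N_1,\dots,N_r]$. Hence it is enough to check that $\gamma_r\{(N_1,\dots,N_r)\}$ is finite, i.e.\ that the hypothesis of the normal-subgroup statement holds for the $N_i$.

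This last point — upgrading finiteness of commutator values from the verbal \emph{subsets} $u_i\{G\}$ to the verbal \emph{subgroups} $u_i(G)$ — is where the non-commutator hypothesis is needed, and it is one of the two places where I expect the real difficulty. If every $u_i\{G\}$ is finite we are immediately done: by P.\ Hall's theorem that non-commutator words are concise, each $N_i=u_i(G)$ is then finite, and so is $[N_1,\dots,N_r]\le N_1$. So the substance is the case in which some $u_i\{G\}$ is infinite, where one must show that replacing $u_i\{G\}$ by $N_i$ in the $i$-th slot does not create infinitely many commutator values. I would attack this by expanding a general $[a_1,\dots,a_r]$ with $a_i\in N_i$ into a word in the $u_i$-values and collecting via $[xy,z]=[x,z]^y[y,z]$ and $[x^{-1},z]=([x,z]^{-1})^{x^{-1}}$, which exhibits $[a_1,\dots,a_r]$ as a product of $G$-conjugates of elements of the finite, $G$-invariant set $\gamma_r(u_1,\dots,u_r)\{G\}$; the delicate point is to bound the \emph{number} of such factors, and for that one must exploit that the $u_i\{G\}$ are verbal sets of non-commutators (not merely conjugation-closed), together with the usual preliminary reductions — conciseness may be checked on finitely generated groups, and $\gamma_r(u_1,\dots,u_r)(G)$ is finitely generated and normal — which let one assume $G$ is sufficiently well-behaved.

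For the normal-subgroup statement itself I would induct on $r$. The case $r=1$ is trivial; the case $r=2$ is the base and already the heart of the matter: if $\gamma_2\{(N_1,N_2)\}$ has $n$ elements, then for each $a\in N_1$ the $N_2$-class $a^{N_2}=a\cdot\{[a,b]\mid b\in N_2\}$ has size at most $n$, and symmetrically each $b\in N_2$ has at most $n$ conjugates under $N_1$, so $[N_1,N_2]$ is finite by a relative form of B.\ H.\ Neumann's BFC theorem (obtained by adapting his original argument to two normal subgroups). For $r\ge 3$ one would like to peel off $N_r$ and apply the case $r=2$ to $[N_1,\dots,N_{r-1}]$ and $N_r$, but the hypothesis does not give finiteness of $\gamma_{r-1}\{(N_1,\dots,N_{r-1})\}$, so a bare induction does not work; this is the second main obstacle, and the one whose resolution I take from P.\ Hall's unpublished proof that $\gamma_r$ is concise: pass to the quotient of $G$ by a suitable finite normal subgroup of $L:=[N_1,\dots,N_r]$, in which the maps $a_r\mapsto[a_1,\dots,a_r]$ (for fixed $a_1,\dots,a_{r-1}$) become homomorphisms with finite image, and feed this into the induction hypothesis to force $L$ finite. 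The derived-word case is handled in the same way, inducting on $k$ with the two copies of $\delta_{k-1}$ playing the roles of $\gamma_{r-1}$ and of the outermost variable; this is what yields the announced generalized version covering $w(u_1,\dots,u_r)$ for $w$ a lower central or derived word.
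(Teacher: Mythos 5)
Your reduction to normal subgroups is the right general strategy, and the identities $\gamma_r(u_1,\ldots,u_r)\{G\}=\gamma_r\{(u_1\{G\},\ldots,u_r\{G\})\}$ and $\gamma_r(u_1,\ldots,u_r)(G)=[N_1,\ldots,N_r]$ are correct and are exactly what the paper uses. The gap is in the pivot ``hence it is enough to check that $\gamma_r\{(N_1,\ldots,N_r)\}$ is finite.'' That upgrade from the verbal \emph{sets} $u_i\{G\}$ to the verbal \emph{subgroups} $N_i=u_i(G)$ is not achievable by the collection argument you sketch: writing $a_i\in N_i$ as a product of $m_i$ elements of $u_i\{G\}^{\pm 1}$ and collecting gives $[a_1,\ldots,a_r]\in \gamma_r\{\tS\}^{\ast m_1\cdots m_r}$ (this is the paper's Lemma~\ref{width ocw}), but the lengths $m_i$ are unbounded over $N_i$, so no finite bound on the number of factors --- and hence no finiteness of the set of values on the $N_i$ --- follows. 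The paper never proves, and does not need, that $\gamma_r\{\tN\}$ is finite in the setting of Theorem~A; its Theorem~B is a corollary of a more general statement, not the engine of the proof.

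What actually makes the argument close is the \emph{power condition}, which your proposal does not isolate: since each $u_i$ is a non-commutator, $u_i\{G\}$ contains all $n_i$th powers of elements of $G$ (hence of $N_i$) for some $n_i\neq 0$. The paper proves conciseness relative to normal generating sets $S_i$ satisfying this condition (Theorem~\ref{gamma_r concise most general}): one builds a series from $\gamma_r(\tN)'$ (finite by the Schur-type Lemma~\ref{w(N)' finite}) up to $\gamma_r(\tN)$ whose factors are generated by boundedly many values $[s_1,\ldots,x_i,\ldots,s_r]$ and on which $\gamma_r$ is \emph{linear} in position $i$; linearity gives $[s_1,\ldots,x_i,\ldots,s_r]^{\lambda n_i}\equiv[s_1,\ldots,x_i^{\lambda n_i},\ldots,s_r]$, and since $x_i^{\lambda n_i}\in S_i$ these powers all land in the finite set $\gamma_r\{\tS\}$, forcing bounded order. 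Your sketch of the normal-subgroup theorem (BFC for $r=2$, quotienting to make the slot maps homomorphisms for $r\ge 3$) gropes toward this linearization idea, but as stated the induction on $r$ stalls for the reason you yourself note --- the hypothesis does not pass to $\gamma_{r-1}$ --- and, more importantly, even a complete proof of Theorem~B would not feed back into Theorem~A without the power condition. You need to prove the $S_i$-relative statement directly, not Theorem~B followed by an upgrade.
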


The main result in this paper is the proof of this conjecture, not only for lower central words, but also for derived words.

\begin{thmA}
Let $w=w(x_1,\ldots,x_r)$ be a lower central word or a derived word.
If $u_1,\ldots,u_r$ are non-commutator words in disjoint sets of variables, then the word $w(u_1,\ldots,u_r)$ is concise.
In particular, the word $w(x_1^{n_1},\ldots,x_r^{n_r})$ is concise whenever
$n_1,\ldots,n_r\in\Z\smallsetminus\{0\}$.
\end{thmA}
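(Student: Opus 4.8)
The plan is to deduce Theorem~A from our main result, the conciseness of $w$ on normal subgroups stated in the abstract, applied to the characteristic subgroups $N_i:=u_i(G)$ of $G$.

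Fix a group $G$, put $v:=w(u_1,\ldots,u_r)$, and assume that the set of word values $v\{G\}$ is finite; we must show that $v(G)$ is finite. The first step is a formal identification: since $w$ is an iterated commutator in which each variable occurs exactly once, and every value of $u_i$ lies in $N_i$, each $v$-value lies in $w(N_1,\ldots,N_r)$, so $v(G)\le w(N_1,\ldots,N_r)$; conversely, writing an element $h_i\in N_i=u_i(G)$ as a product of values of $u_i^{\pm1}$ and expanding $w(h_1,\ldots,h_r)$ by the commutator identities $[ab,c]=[a,c]^b[b,c]$ and $[a,bc]=[a,c][a,b]^c$, and using that conjugates and inverses of $v$-values are again $v$-values, one gets $w(h_1,\ldots,h_r)\in\langle v\{G\}\rangle=v(G)$. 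Hence $v(G)=w(N_1,\ldots,N_r)$, so it suffices to prove that $w(N_1,\ldots,N_r)$ is finite, and by the normal-subgroup theorem this will follow once we know that the \emph{set} of values $w\{N_1,\ldots,N_r\}=\{\,w(h_1,\ldots,h_r)\mid h_i\in N_i\,\}$ is finite.

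The crux is therefore the implication that finiteness of $v\{G\}$ forces finiteness of $w\{N_1,\ldots,N_r\}$, and this is where the hypothesis that the $u_i$ are non-commutator words is used in an essential way. The difficulty is that an element of $N_i=u_i(G)$ is a priori only an \emph{unbounded} product of values of $u_i^{\pm1}$, so after the expansion above a value of $w$ on the $N_i$ is an unbounded product of elements of the finite normal set $v\{G\}$, and one must rule out that infinitely many distinct such products occur. A natural first move is to pass to the normal subgroup $C:=C_G(v\{G\})$, which has finite index in $G$ (each of the finitely many elements of the normal set $v\{G\}$ has only finitely many conjugates) and which centralises $v(G)$; then $C\cap v(G)$ lies in $Z(v(G))$ and has finite index in $v(G)$, so Schur's theorem gives that $v(G)'$ is finite, and inside $C$ the conjugations in the commutator expansions become trivial. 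One is thereby reduced to showing that the finitely generated abelian group $v(G)/v(G)'$ is torsion, and here one brings in that each $u_i$ has a variable of non-zero exponent sum $e_i$ — so that in particular $g^{e_i}\in u_i\{G\}$ for all $g\in G$ — together with the conciseness of the non-commutator words $u_i$ (P.\ Hall).

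Finally, the ``in particular'' statement is immediate: $x_i^{n_i}$ is a non-commutator word, the exponent sum of its single variable being $n_i\neq0$, and the words $x_1^{n_1},\ldots,x_r^{n_r}$ involve pairwise disjoint variables, so $w(x_1^{n_1},\ldots,x_r^{n_r})$ is the special case $u_i=x_i^{n_i}$ of the first assertion. The step I expect to be the real obstacle is the bridging implication above — converting finiteness of the \emph{set} of $v$-values into finiteness of the set of $w$-values on the entire verbal subgroups $u_i(G)$ — since it is exactly there that the word combinatorics of non-commutator words must be combined with a finite-index and centraliser reduction in order to tame the otherwise unbounded commutator expansions.
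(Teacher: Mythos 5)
Your top-level reduction coincides with the paper's: set $N_i=u_i(G)$, identify $v(G)=w(N_1,\ldots,N_r)$ via the commutator expansion (this is \cref{verbal subgroup of ocw of other words} together with \eqref{comm with normal gens}), and then invoke a conciseness-on-normal-subgroups statement. But the pivot you choose creates a genuine gap, which you flag yourself without closing. You propose to apply Theorem~B, whose hypothesis is finiteness of the \emph{full} value set $w\{N_1,\ldots,N_r\}$, and so you need the implication ``$v\{G\}$ finite $\Rightarrow$ $w\{N_1,\ldots,N_r\}$ finite''. That implication is essentially as hard as the theorem itself --- a priori the only way to see that $w\{\tN\}$ is finite is to show that the subgroup $w(\tN)$ it generates is finite --- and the paper never proves or uses it. What the paper proves instead is a \emph{refinement} of Theorem~B (\cref{gamma_r concise most general} and \cref{delta_k concise most general}): if each $N_i$ is generated by a normal subset $S_i$ containing all $n_i$th powers of elements of $N_i$, then finiteness of $w\{\tS\}$ alone forces finiteness of $w(\tN)$. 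Theorem~A follows by taking $S_i=u_i\{G\}$, since a non-commutator word $u_i$ satisfies $g^{n_i}\in u_i\{G\}$ for all $g\in G$. Your proposal has no mechanism for passing from values on the generating sets to values on the whole subgroups.

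Your sketch for closing the gap (pass to $C_G(v\{G\})$, apply Schur to get $v(G)'$ finite, then show $v(G)/v(G)'$ is torsion using the exponent-sum condition) also fails as stated. The Schur step is fine and is exactly \cref{w(N)' finite}. But to show that the generators of $v(G)/v(G)'$ have finite order one needs the word to be linear in some position, so that an exponent can be pulled out of the commutator and absorbed into $S_i$ by the power condition; and $\gamma_r$ (let alone $\delta_k$) is \emph{not} linear in any position modulo $v(G)'$ alone. The paper must interpolate a whole filtration $v(G)'=P_{r+1}\le\cdots\le P_1=v(G)$ (\cref{linear series gamma}, and the considerably more delicate \cref{linear series delta} with extended words in the derived case) precisely because linearity in position $i$ only holds modulo the larger subgroup $P_{i+1}$; the bounds of \cref{one value in ocw} and \cref{width ocw} are then needed to control the number of generators of each factor. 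This filtration is the actual content of the proof and is entirely absent from your argument. The ``in particular'' clause of Theorem~A you handle correctly.
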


In \cite{Fe-Mo} Fern\'andez-Alcober and Morigi showed that if a word $w$ is concise then it is actually {\em boundedly concise}, that is,
there is a function $f_w:\N\rightarrow \N$ such that whenever $|w\{G\}|=m$ in a group $G$, we have $|w(G)|\le f_w(m)$.
Their argument requires the use of non-principal ultrafilters over infinite sets, whose existence is independent of Zermelo-Fraenkel set theory.
We remark that our proof of Theorem A shows that the words therein are boundedly concise, without relying on the result in \cite{Fe-Mo}.

Our approach to Theorem A is via the study of $w\{\tN\}$ and $w(\tN)$ for an $r$-tuple $\tN=(N_1,\ldots,N_r)$ of normal subgroups of a group $G$.
More precisely, for every $i=1,\ldots,r$ we consider a generating set $S_i$ of $N_i$ that is a normal subset of $G$, and relate $w(\tN)$ to $w\{\tS\}$, where $\tS=(S_1,\ldots,S_r)$.
Under an additional assumption on the sets $S_i$, we are able to prove that the finiteness of
$w\{\tS\}$ implies that of $w(\tN)$ (see \cref{gamma_r concise most general} and
\cref{delta_k concise most general}).
Theorem A is a corollary of this general result, and another consequence of it is the theorem below, which we think has an independent interest.

\begin{thmB}
Let $w=w(x_1,\ldots,x_r)$ be a lower central word or a derived word.
Assume that $\tN=(N_1,\ldots,N_r)$ is a tuple of normal subgroups of a group $G$ such that
$w\{\tN\}$ is finite.
Then the subgroup $w(\tN)$ is also finite.
\end{thmB}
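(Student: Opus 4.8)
I would prove, by induction on the complexity of $w$ (the length $r$ for a lower central word $\gamma_r$, the height $k$ for a derived word $\delta_k$), a statement whose inductive step is a relative version of the conciseness of $\gamma_2$. First I would reduce to the case that $G$ is finitely generated: since $w\{\tN\}$ is finite, finitely many tuples $\mathbf g^{(1)},\dots,\mathbf g^{(s)}$, each with $i$-th entry in $N_i$, already realise every value of $w$, so with $G_0=\langle\text{all entries of all }\mathbf g^{(j)}\rangle$ and $N_i^0=N_i\cap G_0\trianglelefteq G_0$ we get $w\{(N_1^0,\dots,N_r^0)\}=w\{\tN\}$, hence $w((N_1^0,\dots,N_r^0))=\langle w\{\tN\}\rangle=w(\tN)$, and it suffices to work inside $G_0$. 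So $G$ may be assumed finitely generated, although the $N_i$ need not be; note also that $D:=w(\tN)$ is generated by the \emph{finite} set $w\{\tN\}$ and is normal in $G$, hence a finitely generated normal subgroup. Put $m=|w\{\tN\}|$.

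\textbf{The recursive step.}
Both families of words are built by a single outer commutator: $\gamma_r(\tN)=[\gamma_{r-1}(N_1,\dots,N_{r-1}),N_r]$ and $\delta_k(\tN)=[\delta_{k-1}(N_1,\dots,N_{2^{k-1}}),\delta_{k-1}(N_{2^{k-1}+1},\dots,N_{2^k})]$, so in either case $D=[A,B]$, where $A$ and $B$ are verbal subgroups on normal subgroups of $G$ of strictly smaller complexity -- in particular each is normal in $G$ -- and each carries the $G$-invariant generating set $S_A$, resp.\ $S_B$, consisting of the values of the corresponding smaller word, with $w\{\tN\}=\{[a,b]:a\in S_A,\ b\in S_B\}$. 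The inductive step is then the assertion: \emph{if $A,B\trianglelefteq G$ are generated by $G$-invariant sets $S_A,S_B$ enjoying the closure properties inherited from their construction from the $N_i$ (properties that hold trivially when $S_A=A$ and $S_B=B$), and if $\{[a,b]:a\in S_A,\ b\in S_B\}$ is finite, then $[A,B]$ is finite, with order bounded in terms of $m$.}

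\textbf{Proof of the inductive step.}
For an element $d$ of $D\le A\cap B$, the commutators $[a,d]$ with $a\in S_A$ land in (essentially) $w\{\tN\}$ -- immediately when $S_B$ is the honest subgroup $N_r$, and after expressing $d$ through $S_B$ and absorbing conjugations via normality in general -- whence $S_A$ meets boundedly many cosets of $C_A(d)$; since $D$ is generated by at most $m$ such elements, this forces $|G:C_G(D)|$ to be finite and bounded in terms of $m$, and symmetrically with $A$ and $B$ interchanged. Consequently every element of $D$ has at most $m$ conjugates in $D$, so $D$ is a BFC-group, and by B.\,H.\,Neumann's theorem $[D,D]$ is finite with order bounded by a function of $m$. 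Passing to $G/[D,D]$ reduces us to the case that $D$ is abelian. Then $A$ and $B$ act on the finitely generated abelian group $D$ through the finite quotients $A/C_A(D)$ and $B/C_B(D)$, whose mutual commutator is trivial (because $[A,B]=D$ centralises $D$); so $D$ is a module over a finite group ring, and the commutator map $S_A\times S_B\to D$ is additive in each slot modulo the respective action. Playing the two actions off against each other, together with the closure of $S_A$ and $S_B$, forces every value $[a,b]$ into the torsion subgroup of $D$ -- an additive-up-to-action map into a torsion-free abelian group with bounded image vanishes there. Hence $D$ equals its finite torsion subgroup, and retracing the quotients yields the claimed bound. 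The derived word case is handled the same way, symmetrising the roles of $A$ and $B$.

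\textbf{The main obstacle.}
The genuinely delicate point -- and what I expect to be the hard part -- is the final reduction: B.\,H.\,Neumann's theorem only delivers $|[D,D]|<\infty$, which is one step short of $|D|<\infty$, and ruling out a free abelian quotient of $D$ really requires the extra structure, namely that $D$ is generated by commutators on which \emph{two} normal subgroups act through finite quotients, together with the closure properties of the generating sets. This is precisely where an arbitrary word-value set would break down, and where the features special to lower central and derived words (and, in the general theorem behind Theorem B, the ``additional assumption'' on the $S_i$, here fulfilled because one may take $S_i=N_i$) do the work. A further, routine but unavoidable burden is to keep all constants explicit throughout, so that the same argument establishes the bounded version of Theorem B without recourse to the ultrafilter argument of \cite{Fe-Mo}.
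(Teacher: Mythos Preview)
Your reduction to $D'$ finite (via Schur and the finite conjugacy class argument) is correct and is exactly the paper's Lemma~2.8. The gap is in your ``inductive step'', which is where the real content lies.

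The difficulty is precisely the tension you gloss over between the two possible choices of generating sets. If you take $S_A=A$ and $S_B=B$, then yes, the closure properties hold trivially, but the hypothesis you need---that $\{[a,b]:a\in S_A,\,b\in S_B\}$ is finite---is \emph{not} available: for $\gamma_r$ with $r\ge 3$ you only know that $\{[a,b]:a\in\gamma_{r-1}\{N_1,\dots,N_{r-1}\},\,b\in N_r\}$ is finite, not that $\{[a,b]:a\in\gamma_{r-1}(N_1,\dots,N_{r-1}),\,b\in N_r\}$ is. Conversely, if you take $S_A$ to be the honest value set $\gamma_{r-1}\{N_1,\dots,N_{r-1}\}$, then your ``additive-up-to-action'' argument breaks: you want to say that $a\mapsto [a,b]$, restricted to $A_0=C_A(D)$, is a homomorphism with finite image, but $A_0\not\subseteq S_A$, so you do not know its image is bounded. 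Your sentence ``symmetrically with $A$ and $B$ interchanged'' therefore fails for $\gamma_r$, and for $\delta_k$ \emph{neither} side is a subgroup, so neither direction works. The phrase ``playing the two actions off against each other, together with the closure of $S_A$ and $S_B$'' is not a proof; you never say what the closure property is, and I do not see one that both holds for the value sets and makes the torsion argument go through in a single step.

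The paper resolves this by \emph{not} working with the single decomposition $D=[A,B]$. Instead it builds a normal series $D'=P_{r+1}\le P_r\le\cdots\le P_1=D$ of length $r$ (Theorem~2.7 for $\gamma_r$; an analogous but more intricate series of length $2^k+2^{k-1}-1$ for $\delta_k$, Theorem~3.4), where in the $i$th section the word $\gamma_r$ is genuinely \emph{linear} in position $i$---but on the modified tuple $\tN_i=(N_1,\dots,N_{i-1},\gamma_i(N_1,\dots,N_i),N_{i+1},\dots,N_r)$, not on $\tN$. The point is that the $i$th entry of $\tN_i$ is contained in $N_i$, so when linearity pulls out a power, the resulting commutator $[s_1,\dots,x_i^{\lambda},\dots,s_r]$ is still a value in $\gamma_r\{\tN\}$ (this is exactly where $S_i=N_i$ is used). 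Each section is then a finitely generated abelian group whose generators have bounded order, hence is finite. Your single section $D/D'$ is simply too coarse: linearity in position $r$ only holds modulo $P_{r+1}=D'$ on the tuple with $r$th entry $D$ itself, which handles the bottom factor $P_r/P_{r+1}$, but not the rest of $D/D'$.
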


If a word $w$ satisfies the property stated in Theorem B, we say that $w$ is
\emph{concise on normal subgroups}.
Note that Theorem B was known for the usual commutator word $w=[x,y]$, as a consequence of Theorem 4.14 in Robinson's book \cite{Ro}.
Actually, it suffices to require that either of the subgroups $H$ or $K$ are ascendant in $\langle H, K \rangle$ in order to conclude that $[H,K]$ is finite whenever the set $\{[h,k]\mid h\in H,\ k\in K\}$ is finite.
Baer \cite{Ba} had previously proved this result under the assumption that one of $H$ or $K$ is normal in $\langle H,K \rangle$.
We impose the stronger condition that all subgroups should be normal, but in return we generalize the result to all lower central and derived words.
Theorem B actually holds with bounds: if $w\{\tN\}$ is of order $m$, then the order of $w(\tN)$ can be bounded by a function of $m$ and $w$.
We conclude this introduction with an extension of the conjecture of Azevedo and Shumyatsky.

\begin{conj}
Theorems A and B hold for an arbitrary outer commutator word $w$.
\end{conj}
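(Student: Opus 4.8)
The plan is to obtain both theorems for an arbitrary outer commutator word $w$ in the same way they are obtained for lower central and derived words: from a single ``most general'' statement to the effect that if $\tN=(N_1,\ldots,N_r)$ are normal subgroups of $G$ and $\tS=(S_1,\ldots,S_r)$ consists of normal subsets of $G$ with $\langle S_i\rangle=N_i$ that satisfy the additional hypothesis $(\ast)$ appearing in \cref{gamma_r concise most general} and \cref{delta_k concise most general}, then finiteness of $w\{\tS\}$ forces $w(\tN)$ to be finite, with $|w(\tN)|$ bounded in terms of $|w\{\tS\}|$. Granting this for $w$, Theorem~A for $w$ follows by taking $N_i=u_i(G)$ and $S_i=u_i\{G\}$ for the given non-commutator words $u_i$ --- using that verbal subgroups are normal and that $u_i\{G\}$ is a normal subset of $G$, one checks $w(u_1,\ldots,u_r)(G)=w(\tN)$ and $w(u_1,\ldots,u_r)\{G\}=w\{\tS\}$ --- Theorem~B follows by taking $S_i=N_i$, and bounded conciseness is then automatic.

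To prove the implication I would peel off the outermost bracket of $w$: write $w=[w_1,w_2]$ with $w_1,w_2$ outer commutator words in disjoint sets of variables, split $\tN=\tN'\sqcup\tN''$ and $\tS=\tS'\sqcup\tS''$ according to which variables feed $w_1$ and which feed $w_2$, and put $M_1=w_1(\tN')$, $M_2=w_2(\tN'')$ and $T_1=w_1\{\tS'\}$, $T_2=w_2\{\tS''\}$. Then $M_1,M_2$ are normal in $G$, the $T_i$ are normal subsets of $G$ with $\langle T_i\rangle=M_i$, a routine commutator computation using normality of the $N_j$ gives $w(\tN)=[M_1,M_2]$, and, because the variables are disjoint, $w\{\tS\}=\{[t_1,t_2]\mid t_1\in T_1,\ t_2\in T_2\}=\gamma_2\{(T_1,T_2)\}$. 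So it suffices to apply the case $r=2$ of \cref{gamma_r concise most general} to the pair $(M_1,M_2)$ with generating subsets $(T_1,T_2)$. Note that this step needs neither finiteness of $M_1$ or $M_2$, nor finiteness of $w_1\{\tS'\}$ or $w_2\{\tS''\}$, nor an induction hypothesis on $w_1,w_2$ as concise words; the only input is the finiteness of $w\{\tS\}$, which is exactly the finiteness of $\gamma_2\{(T_1,T_2)\}$ --- \emph{provided} the pair $(T_1,T_2)$ satisfies the hypothesis $(\ast)$.

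So the whole conjecture reduces to a closure question: $(\ast)$ must be a condition that (1) holds for the subsets $S_i=u_i\{G\}$ attached to non-commutator words, and for $S_i=N_i$, and (2) is inherited by $w_i\{\tS^{(i)}\}$ for every outer commutator word $w_i$, so that the reduction of the previous paragraph can be iterated. I expect this to be the main obstacle. The hypothesis is genuinely needed: in the discrete Heisenberg group $G=\langle x,y,t\mid [x,y]=t,\ [x,t]=[y,t]=1\rangle$ we have $\langle x\rangle^G=\langle x,t\rangle$, $\langle y\rangle^G=\langle y,t\rangle$, $[\langle x,t\rangle,\langle y,t\rangle]=\langle t\rangle$ is infinite, and yet $[x^g,y^h]=t$ for all $g,h\in G$, so the set of commutators of the conjugacy classes $x^G$ and $y^G$ has a single element; thus ``$S_i$ a normal generating subset'' is much too weak for \cref{gamma_r concise most general}. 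Hence $(\ast)$ must record some ``thickness'' of the $S_i$ reflecting that they carry the values of a concise word, and the delicate point is to find a form of it stable under the passage $w\mapsto(w_1,w_2)$: verifying that $w_1\{\tS'\}$ retains this thickness is, in effect, the conciseness of the smaller word obtained by substituting the relevant $u_j$ into $w_1$, so an induction on the bracketing structure of $w$ is what one must arrange. (The recursion should be organized so that each leaf variable is replaced by some $u_i$ before it is reached, since for $w$ a single variable the naked assertion that a finite normal generating set generates a finite group is false.)
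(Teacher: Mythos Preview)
This statement is a \emph{conjecture} in the paper, not a theorem: the authors prove Theorems~A and~B only for lower central and derived words and explicitly leave the general outer commutator case open. So there is no proof in the paper to compare against; the relevant question is whether your proposal actually closes the gap.

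It does not, and you have in fact located the obstruction yourself. Your reduction writes $w=[w_1,w_2]$, sets $M_i=w_i(\tN^{(i)})$ and $T_i=w_i\{\tS^{(i)}\}$, and then wants to invoke \cref{gamma_r concise most general} with $r=2$ for the pair $(M_1,M_2)$ and generators $(T_1,T_2)$. Hypothesis~(i) there is fine, but hypothesis~(ii) requires an integer $n_i$ with $\{m^{n_i}\mid m\in M_i\}\subseteq T_i$, and this is exactly what fails. Even in the simplest case $w_1=[x_1,x_2]$ with $S_1=N_1$, $S_2=N_2$, the set $T_1=\{[a,b]\mid a\in N_1,\ b\in N_2\}$ is just the set of commutators, and there is no reason for any fixed power of an arbitrary element of $[N_1,N_2]$ to be a single commutator. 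So already for $\gamma_3=[[x_1,x_2],x_3]$ your one-step reduction to $\gamma_2$ does not land inside the hypotheses of \cref{gamma_r concise most general}; the paper handles $\gamma_r$ \emph{not} by such a reduction but by building the bespoke linear series of \cref{linear series gamma}, and similarly for $\delta_k$ via \cref{linear series delta}.

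Your last paragraph suggests rescuing~(ii) by induction, arguing that the needed ``thickness'' of $T_1$ is tantamount to conciseness of $w_1(u_1,\ldots,u_q)$. But these are different properties: conciseness says that if $T_1$ is finite then $M_1$ is finite, whereas~(ii) asks that $T_1$ contain all $n$th powers of $M_1$ for some $n$ regardless of finiteness. Neither implies the other, and in the interesting situations $T_1$ will typically be infinite (it is only $w\{\tS\}$, not $w_1\{\tS'\}$, that is assumed finite). So the induction you sketch is circular: it presupposes a strengthening of conciseness that is precisely the missing ingredient. This is why the conjecture is still open; a proof along the paper's lines would require constructing a linear series analogous to those in \cref{linear series gamma} and \cref{linear series delta} for an arbitrary outer commutator word, and no such construction is known.
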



\section{\bf Preliminaries and lower central words}

\vskip 0.4 true cm

We start with a few results regarding word values and verbal subgroups on normal subgroups or on normal subsets, in the case of outer commutator words.
If $w=w(x_1,\ldots,x_r)$ is an outer commutator word that is not a variable, then we can write $w=[\alpha,\beta]$, where $\alpha$ and $\beta$ are again outer commutator words.
Without loss of generality, after renaming variables if necessary, we may assume that $\alpha=\alpha(x_1,\ldots,x_q)$ and $\beta=\beta(x_{q+1},\ldots,x_r)$, with $1\le q<r$.

\begin{lem}
\label{generators ocw normal subgroups}
Let $w=w(x_1,\ldots,x_r)$ be an outer commutator word, and let
$\tN=(N_1,\ldots,N_r)$ be an $r$-tuple of normal subgroups of a group $G$.
\begin{enumerate}
\item 
Assume that $w=[\alpha,\beta]$, with $\alpha=\alpha(x_1,\ldots,x_q)$ and $\beta=\beta(x_{q+1},\ldots,x_r)$.
If we set $\tN_1=(N_1,\ldots,N_q)$ and $\tN_2=(N_{q+1},\ldots,N_r)$, then
$w(\tN)=[\alpha(\tN_1),\beta(\tN_2)]$.
\item
Assume that $N_i=\langle S_i \rangle$ for every $i=1,\ldots,r$, where each $S_i$ is a normal subset of $G$.
If we set $\tS=(S_1,\ldots,S_r)$, then the subgroup $w(\tN)$ is generated by $w\{\tS\}$.
\end{enumerate}
\end{lem}

\begin{proof}
Both (i) and (ii) follow immediately from the simple fact that if $S$ and $T$ are two normal subsets of a group $G$ then
\[
[\langle S \rangle, \langle T \rangle] = \langle [s,t] \mid s\in S,\ t\in T \rangle,
\]
where for part (ii) we use (i) and induction on the number of variables.
\end{proof}

We are interested in words of the form $w(u_1,\ldots,u_r)$, where $u_1,\ldots,u_r$ are non-commutator words that involve different variables.
Let us introduce the following concept.

\begin{defn}
Let $u_1,\ldots,u_r$ be group words.
We say that these words are {\em disjoint} if the sets of variables that they involve are pairwise disjoint.
\end{defn}

If $w$ is a word in $r$ variables and $u_1,\ldots,u_r$ are disjoint words, then the set of values of the word $w^*=w(u_1,\ldots,u_r)$ in a group $G$ can be written as $w\{\tS\}$, where
\[
\tS = (u_1\{G\},\ldots,u_r\{G\}).
\]
Since every $u_i\{G\}$ is a normal subset of $G$, we get the following consequence of the previous lemma.

\begin{cor}
\label{verbal subgroup of ocw of other words}
Let $w=w(x_1,\ldots,x_r)$ be an outer commutator word and let $u_1,\ldots,u_r$ be arbitrary disjoint words.
If $w^*=w(u_1,\ldots,u_r)$ then for every group $G$ we have
\[
w^*(G) = w(u_1(G),\ldots,u_r(G)).
\]
\end{cor}

Now we want to make part (ii) of \cref{generators ocw normal subgroups} quantitative.
If we take a standard generator $w(n_1,\ldots,n_r)$ of $w(\tN)$, with $n_i\in N_i$, how can we estimate the number of factors from $w\{\tS\}^{\pm 1}$ that are needed to write it?
We need to introduce the following notation.

\begin{defn}
\label{ast product}
Let $G$ be a group and let $S$ be a subset of $G$.
For every $n\in\N$, we define $S^{\ast n}$ to be the set of all products of elements of $S\cup S^{-1}$ of length at most $n$.
\end{defn}

In other references, $S^{\ast n}$ is defined as the set of products of exactly $n$ elements of $S$.
Since we can always replace $S$ with $S\cup S^{-1}\cup \{1\}$, both definitions are basically equivalent.
We prefer the definition above because it suits better the description of the elements of the subgroup $\langle S \rangle$.
Also, with this definition, we have $S^{\ast k}\subseteq S^{\ast n}$ whenever $n\ge k$.
Note that if $|S|\le m$ then $|S^{\ast n}|\le (2m+1)^n$ for every $n\in\N$.
Let us connect this concept with values of outer commutator words.

\begin{lem}
\label{one value in ocw}
Let $w=w(x_1,\ldots,x_r)$ be an outer commutator word, and let $S$ be a normal subset of a group $G$.
Suppose that $\mathbf{t}=(t_1,\ldots,t_r)$ is a tuple of elements of $G$, one of whose components belongs to $S$.
Then $w(\mathbf{t})\in S^{\ast 2^{r-1}}$.
\end{lem}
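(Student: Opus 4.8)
The natural approach is induction on $r$, the number of variables of the outer commutator word $w$. When $r=1$, $w$ is a single variable and $w(\mathbf{t})=t_1\in S\subseteq S^{\ast 1}=S^{\ast 2^{0}}$, so the base case is immediate. For $r>1$, write $w=[\alpha,\beta]$ with $\alpha=\alpha(x_1,\ldots,x_q)$ and $\beta=\beta(x_{q+1},\ldots,x_r)$ outer commutator words in disjoint variables, $1\le q<r$. The tuple $\mathbf{t}$ splits as $\mathbf{t}=(\mathbf{a},\mathbf{b})$ with $\mathbf{a}=(t_1,\ldots,t_q)$ and $\mathbf{b}=(t_{q+1},\ldots,t_r)$, and the distinguished component lying in $S$ is in exactly one of these two sub-tuples.

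The key observation is that one factor of the commutator $w(\mathbf{t})=[\alpha(\mathbf{a}),\beta(\mathbf{b})]=\alpha(\mathbf{a})^{-1}\beta(\mathbf{b})^{-1}\alpha(\mathbf{a})\beta(\mathbf{b})$ already lies in a small $\ast$-power of $S$ by induction, while the other factor, although arbitrary, conjugates this set into itself because $S$ is a normal subset of $G$. Concretely, suppose the component in $S$ belongs to $\mathbf{a}$ (the other case is symmetric, swapping the roles of $\alpha$ and $\beta$ and using that $[\alpha(\mathbf{a}),\beta(\mathbf{b})]^{-1}=[\beta(\mathbf{b}),\alpha(\mathbf{a})]$, which changes nothing about membership in $S^{\ast k}$ since $S^{\ast k}$ is closed under inversion). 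By the induction hypothesis applied to $\alpha$, which has $q\le r-1$ variables, we get $\alpha(\mathbf{a})\in S^{\ast 2^{q-1}}$. Set $c=\beta(\mathbf{b})$, an arbitrary element of $G$. Then
\[
w(\mathbf{t}) = [\alpha(\mathbf{a}),c] = \alpha(\mathbf{a})^{-1}\cdot \big(c^{-1}\alpha(\mathbf{a})c\big) = \alpha(\mathbf{a})^{-1}\cdot \alpha(\mathbf{a})^{c}.
\]
Since $S$ is normal in $G$, so is $S\cup S^{-1}$, hence $S^{\ast n}$ is a normal subset of $G$ for every $n$; thus $\alpha(\mathbf{a})^{c}\in S^{\ast 2^{q-1}}$. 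Therefore $w(\mathbf{t})$ is a product of at most $2^{q-1}+2^{q-1}=2^{q}$ elements of $S\cup S^{-1}$, so $w(\mathbf{t})\in S^{\ast 2^{q}}$. Since $q\le r-1$, we have $2^{q}\le 2^{r-1}$, and using $S^{\ast m}\subseteq S^{\ast n}$ for $m\le n$ we conclude $w(\mathbf{t})\in S^{\ast 2^{r-1}}$, as required.

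I do not expect any serious obstacle here; the only point requiring mild care is to verify that $S^{\ast n}$ is genuinely a normal subset, which follows formally from normality of $S$ and the fact that conjugation is an automorphism, so it respects products and inverses. One should also make sure the induction is set up so that it can be applied to whichever of $\alpha$, $\beta$ contains the distinguished component — this is why the statement is phrased as "one of whose components belongs to $S$" rather than fixing which one, making the inductive step self-contained. The bound $2^{r-1}$ is exactly what the recursion $b(r)=2\,b(r-1)$ with $b(1)=1$ produces in the worst case where the distinguished variable sits at the bottom of a maximally unbalanced nesting.
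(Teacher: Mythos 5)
Your proof is correct and follows essentially the same route as the paper: induction on the number of variables, splitting $w=[\alpha,\beta]$, applying the inductive hypothesis to whichever factor contains the distinguished entry, and writing the commutator as $\alpha(\mathbf{a})^{-1}\alpha(\mathbf{a})^{c}$ so that normality of $S$ doubles the length bound. The only cosmetic difference is that the paper absorbs $2^{q-1}\le 2^{r-2}$ immediately and works with $S^{\ast 2^{r-2}}$, whereas you keep track of $q$ explicitly; both yield the same bound.
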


\begin{proof}
The result is obvious for $r=1$, so we assume $r>1$.
Then we can write $w(\mathbf{t})=[\alpha(\mathbf{t'}),\beta(\mathbf{t''})]$, where $\alpha$ and $\beta$ are outer commutator words, and the tuples $\mathbf{t'}$ and $\mathbf{t''}$ form a partition of $\mathbf{t}$.
Assume without loss of generality that $\mathbf{t'}$ contains an entry from $S$.
By induction on $r$, we have $\alpha(\mathbf{t'})\in S^{\ast 2^{r-2}}$.
Consequently,
\[
w(\mathbf{t}) = \alpha(\mathbf{t'})^{-1} \alpha(\mathbf{t'})^{\beta(\mathbf{t''})} \in S^{\ast 2^{r-1}},
\]
since $S$ is a normal subset of $G$.
\end{proof}

On the other hand, by Lemma 2.8 of \cite{dlH-Mo}, if $w=w(x_1,\ldots,x_r)$ is an outer commutator and $g_1,\ldots,g_r,h$ are elements of a group $G$, then for every $i=1,\ldots,r$ we have
\[
w(g_1,\ldots,g_{i-1},g_i h,g_{i+1},\ldots,g_r)
=
w(g_1^*,\ldots,g_{i-1}^*,g_i^*,g_{i+1}^*,\ldots,g_r^*)
\, \cdot
w(g_1,\ldots,g_{i-1},h,g_{i+1},\ldots,g_r),
\]
where $g_j^*$ is a conjugate of $g_j$ in $G$ for every $j=1,\ldots,r$.
The following lemma follows easily from this result by induction on
$m_1\cdots m_r$.

\begin{lem}
\label{width ocw}
Let $w=w(x_1,\ldots,x_r)$ be an outer commutator word, and let $\tS=(S_1,\ldots,S_r)$ be a tuple of normal subsets of a group $G$.
If $\mathbf{t}=(t_1,\ldots,t_r)$ with $t_i\in S_i^{\ast m_i}$ for every $i=1,\ldots,r$, then
\[
w(\mathbf{t}) \in w\{\tS\}^{\ast m_1\dots m_r}.
\]
\end{lem}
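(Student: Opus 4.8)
The plan is to argue by induction on the number $r$ of variables occurring in $w$. If $r=1$ then $w=x_1$, so $w(\mathbf{t})=t_1\in S_1^{\ast m_1}=w\{\tS\}^{\ast m_1}$ and there is nothing to prove. Suppose $r>1$ and write $w=[\alpha,\beta]$ with $\alpha=\alpha(x_1,\ldots,x_q)$ and $\beta=\beta(x_{q+1},\ldots,x_r)$ outer commutator words; put $\mathbf{t}'=(t_1,\ldots,t_q)$, $\mathbf{t}''=(t_{q+1},\ldots,t_r)$, $\tS'=(S_1,\ldots,S_q)$, $\tS''=(S_{q+1},\ldots,S_r)$, and $m'=m_1\cdots m_q$, $m''=m_{q+1}\cdots m_r$, so that $m'm''=m_1\cdots m_r$.

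Applying the induction hypothesis to $\alpha$ and to $\beta$ yields $a:=\alpha(\mathbf{t}')\in A^{\ast m'}$ and $b:=\beta(\mathbf{t}'')\in B^{\ast m''}$, where $A:=\alpha\{\tS'\}$ and $B:=\beta\{\tS''\}$. Since conjugation in $G$ is an automorphism and each $S_i$ is normal, the sets $A$, $B$ and $C:=w\{\tS\}$ are all normal subsets of $G$; moreover $C=\{[a_0,b_0]\mid a_0\in A,\ b_0\in B\}$, because the two arguments of the commutator in $w(\mathbf{s})=[\alpha(s_1,\ldots,s_q),\beta(s_{q+1},\ldots,s_r)]$ vary independently. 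As $w(\mathbf{t})=[a,b]$, it therefore suffices to establish the following statement: if $A$ and $B$ are normal subsets of $G$ and $C=\{[a_0,b_0]\mid a_0\in A,\ b_0\in B\}$, then $[a,b]\in C^{\ast m'm''}$ for all $a\in A^{\ast m'}$ and $b\in B^{\ast m''}$.

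This auxiliary statement I would prove by induction on $m'+m''$, using only the commutator identities $[xy,z]=[x,z]^y[y,z]$ and $[x,yz]=[x,z][x,y]^z$. The two facts to record first are that, $C$ being a normal subset, every $G$-conjugate of an element of $C^{\ast k}$ again lies in $C^{\ast k}$, and that $[a_0,b_0]\in C^{\ast 1}$ whenever $a_0\in A\cup A^{-1}$ and $b_0\in B\cup B^{-1}$, since $[a_0^{-1},b_0]$, $[a_0,b_0^{-1}]$ and $[a_0^{-1},b_0^{-1}]$ are $G$-conjugates of $[a_0,b_0]^{\pm 1}$; this settles the base case $m'=m''=1$. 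For the inductive step $m'+m''\ge 3$, at least one of $m',m''$ is $\ge 2$; say $m'\ge 2$ (the case $m''\ge 2$ being symmetric via the second identity). Writing $a=a_0\tilde a$ with $a_0\in A\cup A^{-1}$ and $\tilde a\in A^{\ast(m'-1)}$ (if $a$ needs at most $m'-1$ factors we are already done by the induction hypothesis), the identity $[a,b]=[a_0,b]^{\tilde a}[\tilde a,b]$ together with the induction hypothesis gives $[a_0,b]\in C^{\ast m''}$ and $[\tilde a,b]\in C^{\ast(m'-1)m''}$, and conjugation by $\tilde a$ preserves $C^{\ast m''}$; hence $[a,b]\in C^{\ast m''}C^{\ast(m'-1)m''}=C^{\ast m'm''}$.

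The only real difficulty here is organisational rather than conceptual: one must keep careful track of the $\ast$-lengths while expanding a commutator of bounded products, and check at each stage that the conjugating factors that appear do not spoil membership in $C^{\ast k}$. That is exactly what the normality of each $S_i$ — and hence of $A$, $B$ and $C$ — is there to guarantee, and no group-theoretic input beyond these elementary commutator identities and the definition of an outer commutator word is required.
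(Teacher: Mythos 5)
Your proposal is correct and follows essentially the same route as the paper: induct on $r$, split $w=[\alpha,\beta]$, apply the inductive hypothesis to get $\alpha(\mathbf{t}')\in\alpha\{\tS'\}^{\ast m'}$ and $\beta(\mathbf{t}'')\in\beta\{\tS''\}^{\ast m''}$, and then expand the commutator of two bounded products using normality. The only difference is that the paper compresses your auxiliary induction on $m'+m''$ into the phrase ``standard commutator identities imply,'' which you have simply written out in full.
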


In an abelian group $G$, the word map $(g_1,\ldots,g_r)\mapsto w(g_1,\ldots,g_r)$ is a group homomorphism for every word $w$, and consequently $w(G)=w\{G\}$.
Of course, this is far from being true in arbitrary groups.
Outer commutator words, although they are also called multilinear words because the same type of commutator arrangements yields multinear words in Lie rings, are not multilinear in groups.
However, our approach to proving Theorems A and B relies on showing that, in suitable sections that cover the section $w(\tN)/w(\tN)'$, outer commutator words are linear in one specific variable (which depends on the section).
To this purpose, we give the following definition.

\begin{defn}
Let $w=w(x_1,\ldots,x_r)$ be a word and let $\tN=(N_1,\ldots,N_r)$ be an $r$-tuple of normal subgroups of a group $G$.
We say that $w$ is {\em linear in position $i$ of the tuple $\tN$}
provided that,
for all $g_j\in N_j$ for $j=1,\ldots,r$ and $h_i\in N_i$, we have
\begin{equation}
\label{def linearity}
w(g_1,\ldots,g_{i-1},g_ih_i,g_{i+1},\ldots,g_r)
=
w(g_1,\ldots,g_{i-1},g_i,g_{i+1},\ldots,g_r)
\, \cdot
w(g_1,\ldots,g_{i-1},h_i,g_{i+1},\ldots,g_r).
\end{equation}
\end{defn}

Typically, we will search for linearity in a normal section $K/L$ of the ambient group $G$ that is generated by the image of $w\{\tN\}$, so that condition \eqref{def linearity} above is required to hold modulo $L$.
Obviously, this type of linearity is inherited by sections of the form $KN/LN$ for a given $N\trianglelefteq G$.
Next we show that it is also preserved under taking suitable commutators.

\begin{lem}
\label{linearity after commutator}
Let $w=[\alpha,\beta]$ be an outer commutator word, where
$\alpha=\alpha(x_1,\ldots,x_q)$ and $\beta=\beta(x_{q+1},\ldots,x_r)$.
Assume that $\tN=(N_1,\ldots,N_r)$ is a tuple of normal subgroups of a group $G$, and set $\tN_1=(N_1,\ldots,N_q)$ and
$\tN_2=(N_{q+1},\ldots,N_r)$.
Then the following hold:
\begin{enumerate}
\item
If $K/L$ is a normal section of $G$ generated by the image of $\alpha\{\tN_1\}$ and
$\alpha$ is linear in component $i$ of $\tN_1$ modulo $L$, then
the section $U/V$, where $U=[K,\beta(\tN_2)]$ and
$V=[w(\tN),\alpha(\tN_1)][L,\beta(\tN_2)]$, is generated by the image of $w\{\tN\}$ and $w$ is linear in component $i$ of $\tN$ modulo $V$.
\item
If $K/L$ is a normal section of $G$ generated by the image of $\beta\{\tN_2\}$ and
$\beta$ is linear in component $i$ of $\tN_2$ modulo $L$, then
the section $U/V$, with $U=[\alpha(\tN_1),K]$ and
$V=[w(\tN),\beta(\tN_2)][\alpha(\tN_1),L]$, is generated by the image of $w\{\tN\}$ and $w$ is linear in component $q+i$ of $\tN$ modulo $V$.
\end{enumerate}
\end{lem}

\begin{proof}
We only prove part (i).
To start with, we have
\[
[K,\beta(\tN_2)]
=
[\alpha(\tN_1)L,\beta(\tN_2)]
=
[\alpha(\tN_1),\beta(\tN_2)] \, [L,\beta(\tN_2)]
=
w(\tN) \, [L,\beta(\tN_2)],
\]
where the last equality follows from (i) of \cref{generators ocw normal subgroups}.
Thus the section $U/V$ is generated by the image of $w\{\tN\}$.

As for the assertion about linearity, let us consider the general congruence stating that $\alpha$ is linear in component $i$ of $\tN_1$ modulo $L$.
This can be written in the form $x\equiv yz \pmod L$, where $x$, $y$, and $z$ are like the three elements appearing in \eqref{def linearity} (with $\alpha$ playing the role of $w$).
In particular, $x,y,z\in\alpha\{\tN_1\}$.
Standard commutator identities then yield that, for every $n\in\beta\{\tN_2\}$, we have
\[
[x,n] \equiv [y,n][z,n]
\pmod{[\alpha(\tN_1),\beta(\tN_2),\alpha(\tN_1)] \, [L,\beta(\tN_2)]}.
\]
This proves the result.
\end{proof}

We can use the previous lemma to determine, for every lower central word $\gamma_r$ and every $r$-tuple $\tN$ of normal subgroups, a series from
$[\gamma_r(\tN),\gamma_r(\tN)]$ to $\gamma_r(\tN)$ that is linear for $\gamma_r$ at every section.

\begin{thm}
\label{linear series gamma}
Let $r\in\N$.
Assume that $\tN=(N_1,\ldots,N_r)$ is a tuple of normal subgroups of a group $G$,
and define
\[
\tN_i=(N_1,\ldots,N_{i-1},\gamma_i(N_1,\ldots,N_i),N_{i+1},\ldots,N_r)
\]
for every $i=1,\ldots,r$.
Then there is a series
\[
[\gamma_r(\tN),\gamma_r(\tN)] = P_{r+1}^r
\le
P_r^r
\le
\cdots
\le
P_i^r
\le
\cdots
\le
P_1^r = \gamma_r(\tN)
\]
such that, for every $i=1,\ldots,r$, the section $P_i^r/P_{i+1}^r$ is generated by the image of $\gamma_r\{\tN_i\}$ and the word $\gamma_r$ is linear in component $i$ of $\tN_i$ modulo $P_{i+1}^r$.
\end{thm}

\begin{proof}
Set $Q_i^r=\gamma_r(\tN_i)$ for every $i=1,\ldots,r$, and
$Q_{r+1}^r=[\gamma_r(\tN),\gamma_r(\tN)]$.
Obviously, the conditions that $P_{r+1}^r=[\gamma_r(\tN),\gamma_r(\tN)]$ and that
$P_i^r/P_{i+1}^r$ is generated by the image of $Q_i^r$ mean that we need to choose
$P_i^r = Q_i^r Q_{i+1}^r \ldots Q_{r+1}^r$, for $i=1,\ldots,r+1$.

Let us then prove the linearity of $\gamma_r$ in component $i$ of $\gamma_r(\tN_i)$ modulo $P_{i+1}^r$.
We argue by induction on $r-i$.
The basis of the induction, $i=r$, follows from the congruence
$[g,x_ry_r]\equiv [g,x_r][g,y_r] \pmod{P_{r+1}}$ for all $g\in G$ and
$x_r,y_r\in\gamma_r(\tN)$,
which holds because $P_{r+1}^r=[\gamma_r(\tN),\gamma_r(\tN)]$.

Let us now assume that $1\le i<r$ and that the result is true for differences less than $r-i$.
For every $i=1,\ldots,r$, let $Q_i^{r-1}$ and $P_i^{r-1}$ be defined from the tuple $\tN^*=(N_1,\ldots,N_{r-1})$ in the same way as we defined $P_i^r$ and $Q_i^r$ from $\tN$.
Then linearity holds in position $i$ of 
\[
\tN_i^* = (N_1,\ldots,N_{i-1},\gamma_i(N_1,\ldots,N_i),N_{i+1},\ldots,N_{r-1})
\]
modulo $P_{i+1}^{r-1}$.
Now we apply \cref{linearity after commutator} by taking $K=P_i^{r-1}$, $L=P_{i+1}^{r-1}$,
$\alpha=\gamma_{r-1}$ and $\beta=x_r$.
Thus $\gamma_r$ is linear in component $i$ of $\tN_i$ modulo the subgroup
\begin{equation}
\label{modulus gamma}
[\gamma_r(\tN),\gamma_{r-1}(\tN^*)] \, [P_{i+1}^{r-1},N_r].
\end{equation}
Observe that
\[
[\gamma_r(\tN),\gamma_{r-1}(\tN^*)] = 
[N_1,\ldots,N_{r-1},\gamma_r(N_1,\ldots,N_r)] = Q_r^r \le P_{i+1}^r,
\]
since $r-i\ge 1$.
On the other hand,
\[
[P_{i+1}^{r-1},N_r] =  \Bigg(\prod_{j=i+1}^{r-1} \, [Q_j^{r-1},N_r]\Bigg) \cdot [Q_r^{r-1},N_r] = \Bigg( \prod_{j=i+1}^{r-1} \, Q_j^r \Bigg)\cdot [Q_r^{r-1},N_r],
\]
and
\[
[Q_r^{r-1},N_r]
=
[\gamma_{r-1}(N_1,\ldots,N_{r-1}),\gamma_{r-1}(N_1,\ldots,N_{r-1}),N_r]
\le [N_1,\ldots,N_{r-1},\gamma_r(N_1,\ldots,N_r)] = Q_r^r,
\]
where the inclusion follows from P.\ Hall's Three Subgroup Lemma.
Hence the subgroup in \eqref{modulus gamma} is contained in $P_{i+1}^r$, and the result follows.
\end{proof}

We are now in a position to prove the key theorem that will provide both Theorem A and Theorem B for lower central words.
We need the following version for normal subgroups of a well-known lemma in the theory of concise words (see, for example, \cite[Lemma 4]{De-Mo-Sh}).
The proof is exactly the same, based on Schur's Theorem, and taking into account also part (ii) of \cref{generators ocw normal subgroups} in this case, so we omit it.
In the remainder of the paper, for a tuple $S$ of parameters, we use the expression {\em $S$-bounded} to mean ``bounded by a function of $S$".

\begin{lem}
\label{w(N)' finite}
Let $w=w(x_1,\ldots,x_r)$ be an arbitrary word and let $\tN=(N_1,\ldots,N_r)$ be an $r$-tuple of normal subgroups of a group $G$.
Suppose that $N_i=\langle S_i \rangle$, where $S_i$ is a normal subset of $G$ for every $i=1,\ldots,r$, and set $\tS=(S_1,\ldots,S_r)$.
If $w\{\tS\}$ is finite of order $m$ then $w(\tN)'$ is finite of $m$-bounded order.
\end{lem}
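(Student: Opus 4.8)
The plan is to transplant the classical argument for ordinary verbal subgroups (as in \cite[Lemma~4]{De-Mo-Sh}) to the present setting, with the normal subset $w\{\tS\}$ taking over the role played there by $w\{G\}$. The engine is the quantitative form of Schur's theorem: if a group $H$ satisfies $|H:Z(H)|=n$, then $H'$ is finite of order bounded by a function of $n$.

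First I would observe that $w\{\tS\}$ is a normal subset of $G$. Indeed, for $g\in G$ and $s_i\in S_i$ one has $w(s_1,\ldots,s_r)^g=w(s_1^g,\ldots,s_r^g)$, and each $s_i^g$ again lies in $S_i$ because $S_i$ is normal in $G$, so this conjugate again belongs to $w\{\tS\}$. Consequently $G$ acts by conjugation on the finite set $w\{\tS\}$, and the kernel $C=C_G(w\{\tS\})$ of this action has index at most $m!$ in $G$, since $G/C$ embeds in the symmetric group on the $m$-element set $w\{\tS\}$.

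Next I would use that $w\{\tS\}$ generates $w(\tN)$, which is the content of \cref{generators ocw normal subgroups} and is the point where the hypothesis that the $S_i$ are normal generating sets is genuinely needed. Since the centraliser of a subset equals the centraliser of the subgroup it generates, this gives $C=C_G(w(\tN))$; hence $C\cap w(\tN)$ centralises $w(\tN)$ and therefore lies in $Z(w(\tN))$, while conversely $Z(w(\tN))$ centralises $w\{\tS\}$ and lies in $w(\tN)$, so that $Z(w(\tN))=C\cap w(\tN)$. Therefore
\[
|w(\tN):Z(w(\tN))| = |w(\tN):C\cap w(\tN)| \le |G:C| \le m!.
\]

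Finally, applying Schur's theorem to $H=w(\tN)$ shows that $w(\tN)'$ is finite of order bounded by a function of $m!$, hence by a function of $m$; as the bound in Schur's theorem is explicit, this is an $m$-bounded estimate, as required. I expect no real difficulty here: the only step that is not a verbatim copy of the classical proof is the passage ``$w\{\tS\}$ generates $w(\tN)$'', and this is precisely what \cref{generators ocw normal subgroups} supplies (it is the reason normality of the $S_i$ is imposed), so it should be regarded as the one place deserving attention rather than as a genuine obstacle.
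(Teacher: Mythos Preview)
Your argument is exactly the one the paper intends --- centraliser of the finite normal set $w\{\tS\}$ plus Schur --- and for outer commutator words it goes through. The gap is that you invoke \cref{generators ocw normal subgroups} to get $\langle w\{\tS\}\rangle = w(\tN)$, but that lemma is stated and proved only for outer commutator words, whereas the present lemma is stated for an \emph{arbitrary} word $w$. This is not a cosmetic mismatch: for general words the equality $\langle w\{\tS\}\rangle = w(\tN)$ can fail, and in fact the lemma as stated is false. Take $w=x_1^2$, $G=N_1=S_n$, and let $S_1$ be the conjugacy class of transpositions. Then $S_1$ is a normal generating set of $N_1$ and $w\{\tS\}=\{1\}$, so $m=1$; but $w(\tN)=\langle g^2 : g\in S_n\rangle = A_n$, whence $w(\tN)'=A_n$ for $n\ge 5$, which is not $m$-bounded.

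So your proof is valid precisely on the range where \cref{generators ocw normal subgroups} applies, namely outer commutator words. This covers $\gamma_r$ and $\delta_k$, the only cases in which the lemma is actually used later, so nothing downstream is harmed. But you should not cite \cref{generators ocw normal subgroups} outside its stated scope and declare ``no real difficulty''; the honest fix is to note that the hypothesis ``arbitrary word'' must be read as ``outer commutator word'' (or else replace $w(\tN)'$ by $w(\tS)'$ in the conclusion, which your argument does establish for arbitrary $w$).
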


\begin{thm}
\label{gamma_r concise most general}
Let $r\in\N$ and let $\tN=(N_1,\ldots,N_r)$ be a tuple of normal subgroups of a group $G$.
Assume that $N_i=\langle S_i \rangle$ for every $i=1,\ldots,r$, where:
\begin{enumerate}
\item
$S_i$ is a normal subset of $G$.
\item
There exists $n_i\in\N$ such that all $n_i$th powers of elements of $N_i$ are contained in $S_i$.
\end{enumerate}
If  for the tuple $\tS=(S_1,\ldots,S_r)$ the set of values $\gamma_r\{\tS\}$ is finite of order $m$, then the subgroup
$\gamma_r(\tN)$ is also finite, of $(m,r,n_1,\ldots,n_r)$-bounded order.
\end{thm}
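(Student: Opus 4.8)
The plan is to prove the theorem by induction on $r$, using \cref{linear series gamma} to reduce the problem, in each section $P_i/P_{i+1}$ of the series, to the situation of a linear word, where finiteness of the verbal subgroup follows from finiteness of the set of values together with a counting argument. First I would dispose of the top of the series: by \cref{generators ocw normal subgroups} the subgroup $\gamma_r(\tN)$ is generated by $\gamma_r\{\tS\}$, which has order $m$, and hence by \cref{w(N)' finite} the commutator subgroup $\gamma_r(\tN)'=P_{r+1}$ is finite of $m$-bounded order. So it suffices to bound the order of each quotient $P_i/P_{i+1}$ for $i=1,\ldots,r$, and to bound the number $r$ of these quotients (which is already $r$, a parameter we are allowed to depend on).

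Fix $i$ and work in the section $P_i/P_{i+1}$. By \cref{linear series gamma}, $\gamma_r$ is linear in position $i$ on the tuple $\tN_i=(N_1,\ldots,\gamma_i(N_1,\ldots,N_i),\ldots,N_r)$ in this section; that is, modulo $P_{i+1}$ the value $[x_1,\ldots,x_{i-1},x_i,x_{i+1},\ldots,x_r]$ depends on $x_i\in\gamma_i(N_1,\ldots,N_i)$ homomorphically. Now $\gamma_i(N_1,\ldots,N_i)$ is generated, by \cref{generators ocw normal subgroups}, by the normal set $T_i=\gamma_i\{(S_1,\ldots,S_i)\}$, and by \cref{one value in ocw} every element of $T_i$ lies in $S_j^{\ast 2^{i-1}}$ for each $j\le i$. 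Fixing the outer entries $x_1,\ldots,x_{i-1},x_{i+1},\ldots,x_r$ in $S_1,\ldots,S_{i-1},S_{i+1},\ldots,S_r$ and letting $x_i$ range over $T_i$, linearity in position $i$ shows that the subgroup generated by all the resulting values modulo $P_{i+1}$ is generated by the images of $[x_1,\ldots,x_{i-1},t,x_{i+1},\ldots,x_r]$ for $t\in T_i$; by \cref{width ocw} each such value, being $\gamma_r$ evaluated on a tuple with entry in $S_j^{\ast 2^{i-1}}$ in position $j$ for $j\le i$ and entry in $S_j^{\ast 1}$ for $j>i$, lies in $\gamma_r\{\tS\}^{\ast 2^{i(i-1)}}$, a set of $(m,r)$-bounded size. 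Thus the contribution of this fixed choice of outer variables is a subgroup of $(m,r)$-bounded order; since each of the finitely many outer variables ranges over $S_1,\ldots$, and the key point is that $P_i/P_{i+1}$ is generated by $\gamma_r\{\tS\}$ modulo $P_{i+1}$ — so it already sits inside $\langle\gamma_r\{\tS\}\rangle P_{i+1}/P_{i+1}$, a homomorphic image of a group generated by $m$ elements whose derived subgroup is finite — one concludes that $P_i/P_{i+1}$ is finite of $(m,r,n_1,\ldots,n_r)$-bounded order.

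Here is where hypothesis (ii) and the parameters $n_i$ enter, and where I expect the main obstacle to lie. Linearity in position $i$ only gives that the map $x_i\mapsto[x_1,\ldots,x_i,\ldots,x_r]\bmod P_{i+1}$ is a homomorphism from $\gamma_i(N_1,\ldots,N_i)$; to bound the image one must control this homomorphism on a generating set, and the natural generating set $T_i$ may be infinite even though $\gamma_r\{\tS\}$ is finite. The device is to pass to the quotient of $\gamma_i(N_1,\ldots,N_i)$ by the kernel: the image is abelian of bounded exponent — the exponent is $n_i$-bounded because the $n_i$th power of any generator is again in $S_i$ by (ii), so $n_i$-th powers of values become trivial in a suitable further section, forcing bounded exponent — and it is generated by the $(m,r)$-boundedly many elements of $\gamma_r\{\tS\}^{\ast 2^{i(i-1)}}$ modulo $P_{i+1}$; a finitely generated abelian group of bounded exponent and bounded rank has bounded order. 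Making the exponent bound precise is the delicate step: one has to exploit that for $g\in N_i$ one has $g^{n_i}\in S_i$, so that the linearity identity applied $n_i$ times relates $[x_1,\ldots,g^{n_i},\ldots,x_r]$ both to $[x_1,\ldots,g,\ldots,x_r]^{n_i}$ and to a single value of $\gamma_r$ on $\tS$; combining these shows that $n_i$-th powers in the relevant quotient lie in the finite set $\gamma_r\{\tS\}\bmod P_{i+1}$, and a further bounded-index argument (or a second application of Schur-type finiteness, already packaged in \cref{w(N)' finite}) finishes the exponent bound. Assembling the $r$ bounded quotients $P_1/P_2,\ldots,P_r/P_{r+1}$ together with the $m$-boundedly finite $P_{r+1}=\gamma_r(\tN)'$ yields that $\gamma_r(\tN)$ is finite of $(m,r,n_1,\ldots,n_r)$-bounded order.
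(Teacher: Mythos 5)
Your proposal is correct and follows essentially the same route as the paper: the series $P_{r+1}\le\cdots\le P_1$ from \cref{linear series gamma}, the base case $\gamma_r(\tN)'$ via \cref{w(N)' finite}, bounding the number of generators of each abelian section $P_i/P_{i+1}$ via \cref{generators ocw normal subgroups}, \cref{one value in ocw} and \cref{width ocw}, and then bounding the order of each generator by combining linearity in position $i$ with hypothesis (ii) and the pigeonhole principle on the finite set $\gamma_r\{\tS\}$. The only blemishes are harmless: the exponent should be $2^{i-1}$ rather than $2^{i(i-1)}$ (only the $i$th entry of the tuple lies in $S_i^{\ast 2^{i-1}}$, the others lie in $S_j$), and the premature claim at the end of your second paragraph that bounded generation alone finishes the section is repaired by your third paragraph, where the order bound is the simple observation that all powers $v^{\lambda n_i}$ land in the $m$-element set $\gamma_r\{\tS\}$ modulo $P_{i+1}$, so two of them coincide.
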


\begin{proof}
We follow the notation $\tN_i$ and $P_i^r$, introduced in the statement of
\cref{linear series gamma}.
We are going to prove that $P_i^r$ is finite of bounded order for $i=1,\ldots,r+1$ by reverse induction on $i$.
Since $P_1^r=\gamma_r(\tN)$, this proves the result.

The basis of the induction follows from \cref{w(N)' finite}, since $P_{r+1}^r=[\gamma_r(\tN),\gamma_r(\tN)]$.
Let us assume that $P_{i+1}^r$ is finite of bounded order and prove that the same holds for $P_i^r$.
Recall that the quotient $P_i^r/P_{i+1}^r$ is the image of $\gamma_r(\tN_i)$, and
then, by a suitable application of \cref{generators ocw normal subgroups}, it can be generated by the images of the set $\tT$ of commutators
\[
[s_1,\ldots,s_{i-1},x_i,s_{i+1},\ldots,s_r],
\]
with $s_j\in S_j$ for $1\le j\le r$, $j\ne i$, and $x_i\in \gamma_i\{\tS_i\}$, where $\tS_i=(S_1,\ldots,S_i)$.
By \cref{one value in ocw}, we have $\gamma_i\{\tS_i\}\subseteq S_i^{\ast 2^{i-1}}$, and then \cref{width ocw} implies that
\[
[s_1,\ldots,s_{i-1},x_i,s_{i+1},\ldots,s_r]\in \gamma_r\{\tS\}^{\ast 2^{i-1}} \subseteq \gamma_r\{\tS\}^{\ast 2^{r-1}}.
\]
From the assumption that $|\gamma_r\{\tS\}|=m$, we get
\[
|\tT| \le (2m+1)^{2^{r-1}},
\]
and consequently $P_i^r/P_{i+1}^r$ can be generated by an $(m,r)$-bounded number of elements.
Since $P_i^r/P_{i+1}^r$ is abelian, the proof will be complete once we show that all elements in $\tT$ have bounded finite order modulo $P_{i+1}^r$.

By \cref{linear series gamma}, the word $\gamma_r$ is linear in position $i$ of the tuple $\tN_i$ modulo $P_{i+1}^r$.
In particular,
\begin{equation}
\label{power out}
[s_1,\ldots,s_{i-1},x_i,s_{i+1},\ldots,s_r]^{\lambda n_i}
\equiv
[s_1,\ldots,s_{i-1},x_i^{\lambda n_i},s_{i+1},\ldots,s_r]
\pmod{P_{i+1}^r},
\end{equation}
for every $\lambda\in\Z$.
Since $x_i\in\gamma_i(N_1,\ldots,N_i)\le N_i$, it follows from (ii) in the statement of the theorem that $x_i^{\lambda n_i}\in S_i$ for all $\lambda\in\Z$.
Thus we get
\[
[s_1,\ldots,s_{i-1},x_i^{\lambda n_i},s_{i+1},\ldots,s_r] \in \gamma_r\{\tS\}.
\]
Since $\gamma_r\{\tS\}$ is finite of order $m$, it follows that there exist $\lambda,\mu\in\{0,\ldots,m\}$, $\lambda\ne\mu$, such that
\[
[s_1,\ldots,s_{i-1},x_i,s_{i+1},\ldots,s_r]^{\lambda n_i}
\equiv
[s_1,\ldots,s_{i-1},x_i,s_{i+1},\ldots,s_r]^{\mu n_i}
\pmod{P_{i+1}^r}.
\]
This implies that $[s_1,\ldots,s_{i-1},x_i,s_{i+1},\ldots,s_r]$ has $(m,n_i)$-bounded finite order modulo $P_{i+1}^r$, as desired. 
\end{proof}

If we take $S_i=N_i$, we get Theorem B for the lower central words.

\begin{cor}
\label{gamma_r concise on normal}
Let $r\in\N$ and let $\tN=(N_1,\ldots,N_r)$ be a tuple of normal subgroups of a group $G$.
If $\gamma_r\{\tN\}$ is finite of order $m$, then the subgroup $\gamma_r(\tN)$ is also finite, of $(m,r)$-bounded order.
\end{cor}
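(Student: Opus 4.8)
The plan is to obtain \cref{gamma_r concise on normal} as the special case of \cref{gamma_r concise most general} in which each generating set $S_i$ is taken to be the full normal subgroup $N_i$. First I would set $S_i = N_i$ for $i = 1, \ldots, r$. Then the requirement $N_i = \langle S_i \rangle$ is trivially met, and hypothesis (i) of \cref{gamma_r concise most general} holds automatically, since a normal subgroup of $G$ is in particular a normal subset of $G$.

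The only point that requires a moment's thought is hypothesis (ii), which asks for an $n_i \in \N$ with $\{x^{n_i} \mid x \in N_i\} \subseteq S_i$. Here one simply takes $n_i = 1$: every element of $N_i$ is its own first power and therefore lies in $S_i = N_i$. With these choices the tuple $\tS = (S_1, \ldots, S_r)$ is exactly $\tN$, so the hypothesis that $\gamma_r\{\tS\}$ be finite of order $m$ is precisely our assumption on $\gamma_r\{\tN\}$.

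Now \cref{gamma_r concise most general} applies and gives that $\gamma_r(\tN)$ is finite, with order bounded by a function of $m$, $r$, and $n_1, \ldots, n_r$. Since we have forced $n_1 = \cdots = n_r = 1$, the $n_i$ contribute nothing and the bound depends only on $m$ and $r$, which is the claimed $(m,r)$-bounded order. I do not expect any obstacle here: all the work has already been done in \cref{gamma_r concise most general} (and, underlying it, in \cref{linear series gamma} and \cref{w(N)' finite}), and the passage to the corollary is merely a legitimate specialization of the parameters $S_i$ and $n_i$.
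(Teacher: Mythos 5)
Your proposal is correct and coincides with the paper's own argument, which derives the corollary from \cref{gamma_r concise most general} precisely by taking $S_i=N_i$ (so that $n_i=1$ works in hypothesis (ii) and the bound collapses to one depending only on $m$ and $r$). Nothing further is needed.
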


Now we deduce Theorem A for lower central words.

\begin{cor}
\label{gamma_r of non-comm concise}
Let $r\in\N$ and let $u_1,\ldots,u_r$ be disjoint non-commutator words.
Then the word $\gamma_r(u_1,\ldots,u_r)$ is boundedly concise.
In particular, $\gamma_r(x_1^{n_1},\ldots,x_r^{n_r})$ is boundedly concise for all $n_i\in \Z\smallsetminus\{0\}$.
\end{cor}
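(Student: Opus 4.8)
The plan is to deduce this from \cref{gamma_r concise most general} by choosing the normal subgroups and generating sets appropriately. Suppose $G$ is a group in which the word $w^* = \gamma_r(u_1,\ldots,u_r)$ takes finitely many values, say $|w^*\{G\}| = m$. For each $i$, let $N_i = u_i(G)$ be the verbal subgroup of $u_i$ in $G$, and let $S_i = u_i\{G\}$ be its set of values. Each $S_i$ is a normal subset of $G$ (verbal value sets are always closed under conjugation, since $u_i(\mathbf{g})^h = u_i(\mathbf{g}^h)$), and $N_i = \langle S_i\rangle$ by definition, so hypothesis (i) of \cref{gamma_r concise most general} holds. For hypothesis (ii): since each $u_i$ is a non-commutator word, the exponent sum of some variable in $u_i$ is a nonzero integer $e_i$; substituting that variable by $g$ and all others by $1$ shows $g^{e_i} \in u_i\{G\} = S_i$ for every $g \in G$, hence every $e_i$-th power of an element of $G$ — in particular of $N_i$ — lies in $S_i$. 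So we may take $n_i = |e_i|$.

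Next I would identify $w^*\{G\}$ with $\gamma_r\{\tS\}$ for $\tS = (S_1,\ldots,S_r)$. Indeed, by definition $\gamma_r\{\tS\}$ is the set of values $[s_1,\ldots,s_r]$ with $s_i \in S_i = u_i\{G\}$; writing each $s_i = u_i(\mathbf{g}_i)$ with the $\mathbf{g}_i$ drawn from disjoint variable tuples (which is possible precisely because the $u_i$ are disjoint), we get $[s_1,\ldots,s_r] = \gamma_r(u_1,\ldots,u_r)$ evaluated on the concatenated tuple, so $\gamma_r\{\tS\} = w^*\{G\}$ and in particular $|\gamma_r\{\tS\}| = m$ is finite. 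Now \cref{gamma_r concise most general} applies and gives that $\gamma_r(\tN) = \gamma_r(u_1(G),\ldots,u_r(G))$ is finite of $(m,r,n_1,\ldots,n_r)$-bounded order. But the $n_i = |e_i|$ depend only on the fixed words $u_i$, not on $G$, so this bound is $(m, w^*)$-bounded. Finally, by \cref{verbal subgroup of ocw of other words} applied to the outer commutator word $\gamma_r$, we have $w^*(G) = \gamma_r(u_1(G),\ldots,u_r(G)) = \gamma_r(\tN)$, which is therefore finite of $(m, w^*)$-bounded order. This is exactly bounded conciseness of $w^*$.

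For the last sentence, apply this with $u_i = x_i^{n_i}$: the power word $x_i^{n_i}$ with $n_i \neq 0$ is a non-commutator word (exponent sum $n_i \neq 0$), and these are trivially disjoint, so $\gamma_r(x_1^{n_1},\ldots,x_r^{n_r})$ is boundedly concise.

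I do not expect a serious obstacle here: the substance is entirely in \cref{gamma_r concise most general}, and the only points requiring care are the verification of hypothesis (ii) via the nonzero exponent sum, and the bookkeeping that the resulting bound depends only on $m$ and the words $u_i$ (so that "boundedly concise" rather than merely "concise" is obtained). The identification $\gamma_r\{\tS\} = w^*\{G\}$ uses the disjointness of the $u_i$ in an essential way — without it, distinct choices of the internal variables of different $u_i$ could not be made independently — but this is built into the hypothesis.
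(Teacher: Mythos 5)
Your proposal is correct and follows essentially the same route as the paper: reduce to \cref{gamma_r concise most general} by taking $N_i=u_i(G)$ and $S_i=u_i\{G\}$, verify hypothesis (ii) via the non-zero exponent sum of a variable in $u_i$, identify $w^*\{G\}=\gamma_r\{\tS\}$ using disjointness, and invoke \cref{verbal subgroup of ocw of other words} to see $w^*(G)=\gamma_r(\tN)$. The only difference is that you spell out a couple of standard verifications (normality of value sets, the $g^{e_i}$ substitution) that the paper leaves implicit.
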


\begin{proof}
Let us consider the word $w=\gamma_r(u_1,\ldots,u_r)$, and
let $G$ be a group in which $w$ takes finitely many values, say $|w\{G\}|=m$.
By \cref{verbal subgroup of ocw of other words}, we have $w(G)=\gamma_r(u_1(G),\ldots,u_r(G))$.
Note that $u_i(G)=\langle S_i \rangle$, where $S_i=u_i\{G\}$, and that
$w\{G\}=\gamma_r\{\tS\}$, where $\tS=(S_1,\ldots,S_r)$.
Now observe that $S_i$ is a normal subset of $G$ and that, since $u_i$ is a non-commutator word, for some $n_i\in\Z\smallsetminus\{0\}$ we have $\{g^{n_i}\mid g\in G\}\subseteq u_i\{G\}$.
Hence $w(G)$ is finite of $(m,r,n_1,\ldots,n_r)$-bounded order by 
\cref{gamma_r concise most general}.
\end{proof}


\vskip 0.8 true cm

\section{\bf Derived words}

\vskip 0.4 true cm

In this section, we prove Theorems A and B for derived words.
The general strategy is still the same as for lower central words: we are going to obtain a suitable series of normal subgroups of $G$, going from
$[\delta_k(\tN),\delta_k(\tN)]$ to $\delta_k(\tN)$, with the property that each of the factors of the series can be generated by a verbal subgroup on a tuple of normal subgroups that is closely related to $\delta_k(\tN)$ and linear in one component.
This is basically \cref{linear series delta} below.
For simplicity, let us refer to such a series as a linear series.
The argument needed to obtain a linear series for derived words presents difficulties and subtleties that did not arise with lower central words, and is also significantly more technical.
For the convenience of the reader and in order to make the procedure for a general $\delta_k$ more understandable, first of all we are going to provide a sketch of it in the particular case of $\delta_2$.

Of course, $\delta_1=\gamma_2$ and, according to \cref{linear series gamma}, we have the following linear series
for $\delta_1(N_1,N_2)$:
\begin{center}
\begin{tikzcd}
{[\tcbhighmath[drop fuzzy shadow]{N_1},N_2]}
\arrow[d, dash]
\\
{\big[N_1,\tcbhighmath[drop fuzzy shadow]{[N_1,N_2]}\big]} 
\arrow[d, dash]
\\
{\big[[N_1,N_2],[N_1,N_2]\big]}
\\
\end{tikzcd}
\captionof{figure}{Series of $[N_1,N_2]$}
\end{center}
In this and in the next diagrams, a red box indicates the component in which we have linearity.

Let us see how we can construct a linear series for $\delta_2(N_1,N_2,N_3,N_4)$ from the series above for $\delta_1$.
To this purpose, we will use \cref{linearity after commutator}, which ensures that linearity is preserved after taking suitable commutators, and also the remark made before that lemma, saying that linearity is preserved after multiplying by a normal subgroup. 
To start with, we take the commutator of the terms of the previous series with $[N_3,N_4]$, obtaining the series
\begin{center}
\begin{tikzcd}
{\big[[\tcbhighmath[drop fuzzy shadow]{N_1},N_2],[N_3,N_4]\big]}
\arrow[d, dash]\\
{\Big[\big[N_1,\tcbhighmath[drop fuzzy shadow]{[N_1,N_2]}\big],[N_3,N_4]\Big]}
\arrow[d, dash]\\
{\Big[ \big[[N_1,N_2],[N_1,N_2] \big],[N_3,N_4] \Big]}
\\
\end{tikzcd}
\end{center}
Now we multiply this series with $\big[ [N_1,N_2],[[N_1,N_2],[N_3,N_4]] \big]$, that contains $\big[ [N_1,N_2],[N_1,N_2],[N_3,N_4] \big]$ by P.\ Hall's Three Subgroup Lemma, and we get the following diagram:
\begin{center}
\begin{tikzcd}
{\big[[\tcbhighmath[drop fuzzy shadow]{N_1},N_2],[N_3,N_4]\big]}
\arrow[d, dash]\\
{\Big[\big[N_1,\tcbhighmath[drop fuzzy shadow]{[N_1,N_2]}\big],[N_3,N_4]\Big]}
\arrow[d, dash]\\
{\Big[[N_1,N_2],\big[[N_1,N_2],[N_3,N_4]\big]\Big]}
\\
\end{tikzcd}
\captionof{figure}{First diagram for $\big[[N_1,N_2],[N_3,N_4]\big]$}
\end{center}
Here, and in the remaining diagrams, instead of the subgroups of the series, we are showing verbal subgroups on normal subgroups whose images coincide with the corresponding factors of the series.
After all, it is in these subgroups where we are going to obtain the linearity conditions.
Be aware then that vertical lines in the diagrams do not denote inclusions from this point onwards.

By swapping the roles of $(N_1,N_2)$ and $(N_3,N_4)$, we can obtain this 
other diagram:
\begin{center}
\begin{tikzcd}
{\big[[N_1,N_2],[\tcbhighmath[drop fuzzy shadow]{N_3},N_4]\big]}
\arrow[d, dash]\\
{\Big[[N_1,N_2],\big[N_3,\tcbhighmath[drop fuzzy shadow]{[N_3,N_4]}\big]\Big]}
\arrow[d, dash]\\
{\Big[\big[[N_1,N_2],[N_3,N_4]\big],[N_3,N_4]\Big]}
\\
\end{tikzcd}
\vspace{-5pt}
\captionof{figure}{Second diagram for $\big[[N_1,N_2],[N_3,N_4]\big]$}
\end{center}
Now we take the commutator of $[N_1,N_2]$ with the terms of this last diagram,
and we add the extra term $\delta_2(N_1,N_2,N_3,N_4)'$ at the bottom:
\begin{center}
\begin{tikzcd}
{\Big[[N_1,N_2],\big[[N_1,N_2],[\tcbhighmath[drop fuzzy shadow]{N_3},N_4]\big]\Big]}
\arrow[d, dash]\\
{\Big[[N_1,N_2],\big[[N_1,N_2],\big[N_3,\tcbhighmath[drop fuzzy shadow]{[N_3,N_4]}\big]\big]\Big]}
\arrow[d, dash]\\
{\Big[[N_1,N_2],\big[\tcbhighmath[drop fuzzy shadow]{\big[[N_1,N_2],[N_3,N_4]\big]},[N_3,N_4]\big]\Big]}
\arrow[d, dash]\\
{\Big[\big[[N_1,N_2],[N_3,N_4]\big],\big[[N_1,N_2],[N_3,N_4]\big]\Big]}
\\
\end{tikzcd}
\vspace{-5pt}
\captionof{figure}{Series of $\Big[[N_1,N_2],\big[[N_1,N_2],[N_3,N_4]\big]\Big]$}
\end{center}
Finally, by gluing the diagrams in Figures 2 and 4 together, we obtain a linear series for the subgroup $\delta_2(N_1,N_2,N_3,N_4)$.

Of course, this is simply a sketch without proofs, but we are going to follow the same procedure in the proof of
\cref{linear series delta}, in order to get a linear series for
$\delta_k$ from a series for $\delta_{k-1}$.
At this point, it is worth noting an important difference with the situation for a lower central word $\gamma_r$.
In that case, every factor of the linear series is of the following form
(again we show the linear component in red):
\[
\big[N_1,\ldots,N_{i-1},\tcbhighmath[drop fuzzy shadow]{[N_1,\ldots,N_i]},N_{i+1},\ldots,N_r\big].
\]
We observe that this subgroup is of the form $\gamma_r(\tM)$, where the $j$th component $M_j$ of $\tM$ is either $N_j$ or a commutator of the terms of $\tN$ that involves $N_j$, and the linearity happens in $M_i$.
However, if we look at the series for $\delta_2$ obtained above, the first two subgroups in Figure 4 are
\begin{equation}
\label{first problematic}
\delta_2(N_1;N_2;[N_1,N_2];[\tcbhighmath[drop fuzzy shadow]{N_3},N_4])
\end{equation}
and
\begin{equation}
\label{second problematic}
\delta_2(N_1;N_2;[N_1,N_2];[N_3,\tcbhighmath[drop fuzzy shadow]{[N_3,N_4]}]),
\end{equation}
which are not of the form $\delta_2(\tM)$ with every $M_j$ a commutator from
$\tN$ involving $N_j$, as we can see by looking at the third component of $\delta_2$.
Also, the linearity does not happen in a component of $\delta_2$, but in a more interior position.
Nevertheless, we can write these subgroups as verbal subgroups on normal subgroups for outer commutator words different from $\delta_2$.
More specifically, if
\[
v(x_1,x_2,x_3,x_4,y_1,y_2) = \big[[x_1,x_2],[[y_1,y_2],[x_3,x_4]]\big]
\]
then the subgroups in \eqref{first problematic} and \eqref{second problematic} are $v(\tM_1)$ and $v(\tM_2)$, where
\begin{equation}
\label{longer tuples}
\tM_1=(N_1,N_2,\tcbhighmath[drop fuzzy shadow]{N_3},N_4,N_1,N_2)
\quad
\text{and}
\quad
\tM_2=(N_1,N_2,N_3,\tcbhighmath[drop fuzzy shadow]{[N_3,N_4]},N_1,N_2),
\end{equation}
where again we have marked the linear components in red.

\vskip 0.2 true cm

After having illustrated the procedure with the case of $\delta_2$, let us proceed to systematically develop the tools that are necessary for the proof of
\cref{linear series delta}.

We start by introducing a special type of words that we can derive from a given outer commutator word $w$, which we call extended words
of $w$.
Before giving the definition, we show the idea behind extended words with an example.
Consider the word $\delta_2=[[x_1,x_2],[x_3,x_4]]$.
This is formed by taking the commutator of $x_1$ and $x_2$, taking the commutator of $x_3$ and $x_4$, and then taking the commutator of these two commutators.
Now suppose that on some occasions, before performing one of these commutators, we introduce a change by taking first the commutator of one (or both) of the components with an outer commutator word not involving the variables $x_1,\ldots,x_4$ appearing in $\delta_2$.
For example, before producing $[x_1,x_2]$, we take the commutator $[[y_1,y_2],x_1]$ and now we follow as in $\delta_2$ taking the commutator with $x_2$,
obtaining $\big[ [[y_1,y_2],x_1], x_2 \big]$.
We could continue with the process of taking commutators without making any other changes, so getting
\[
\big[ \big[ [[y_1,y_2],x_1], x_2 \big], [x_3,x_4] \big],
\]
but we could also make some similar changes in the process, as in the words
\[
\Big[ \big[ [[y_1,y_2],x_1], x_2 \big], \big[x_3,[y_3,x_4]\big] \Big]
\]
and
\[
\Big[ \, \big[ [[y_1,y_2],x_1], x_2 \big], \big[ \, \big[x_3,[y_3,x_4]\big], y_4 \big] \, \Big].
\]
Another possibility is to make a commutator at the very end, after having completed $\delta_2$, as in
\[
\big[y_1,[[x_1,x_2],[x_3,x_4]]\big].
\]
Observe that all these extended words are again outer commutator words, because we never repeat a variable when we make changes in the construction of $\delta_2$.

Let us now give the formal definition of extended words. Notice that this definition differs from the one of extensions of outer commutators words given in Definition 3.1 of \cite{Fe-Sh}.

\begin{defn}
\label{def extended word}
Let $w=w(x_1,\ldots,x_r)$ be an outer commutator, and let $Y=\{y_n\}_{n\in\N}$ be a set of variables that are disjoint from $X$.
For every $k\in\N\cup \{0\}$, we define recursively the set $\ext_k(w)$ of $k$th extended words of $w$ as follows:
\begin{enumerate}
\item
$\ext_0(w)=\{w\}$.
\item
For $k\ge 1$, $\ext_k(w)$ consists of the set
\begin{multline*}
\{ [p,q], [q,p] \mid \text{$p$ outer commutator in $Y$, $q\in\ext_{k-1}(w)$, $p$ and $q$ disjoint} \}
\\
=
\{ [p,q] \mid \text{$p$ outer commutator in $Y$, $q\in\ext_{k-1}(w)$, $p$ and $q$ disjoint} \}^{\pm 1},
\end{multline*}
and, if $w=[\alpha,\beta]$, also of the set
\[
{\textstyle \bigcup\limits_{\ell+m=k}} \, \{ [p,q] \mid \text{$p\in\ext_{\ell}(\alpha)$, $q\in\ext_{m}(\beta)$, $p$ and $q$ disjoint} \}.
\]
\end{enumerate}
If $v\in\ext_k(w)$ then we say that $w$ is an {\em extended word} of degree $k$ of $w$ by outer commutators.
\end{defn}

For brevity, in the remainder we will simply speak of extended words when we mean extended words by outer commutators.
Observe that an extended word $v$ of an outer commutator $w=w(x_1,\ldots,x_r)$ is again an outer commutator, in the variables $\{x_1,\ldots,x_r\}\cup Y$.
Whenever it is convenient we will assume, after renaming the variables, that $v=v(x_1,\ldots,x_r,y_{r+1},\ldots,y_s)$.

Next we generalize \cref{width ocw} to extended words of an outer commutator word.

\begin{lem}
\label{width extended}
Let $v=v(x_1,\ldots,x_r,y_{r+1},\ldots,y_s)$ be an extended word of degree $k$ of an outer commutator word $w=w(x_1,\ldots,x_r)$.
Assume that $\tS=(S_1,\ldots,S_r)$ is a tuple of normal subsets of a group $G$.
If $\mathbf{t}=(t_1,\ldots,t_s)$ is a tuple of elements of $G$ such that $t_i\in S_i^{\ast m_i}$ for every $i=1,\ldots,r$, then
\[
v(\mathbf{t})\in w\{\tS\}^{\ast m_1\ldots m_r 2^k}.
\]
\end{lem}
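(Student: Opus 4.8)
The statement generalizes \cref{width ocw} from outer commutator words to their extended words, and the natural approach is induction on the degree $k$ of the extended word $v$, with the base case $k=0$ being exactly \cref{width ocw} (since $\ext_0(w)=\{w\}$ and $2^0=1$). For the inductive step, I would split according to the two ways an element of $\ext_k(w)$ can arise in \cref{def extended word}(ii).

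\emph{First case: $v=[p,q]$ with $p$ an outer commutator in the $Y$-variables and $q\in\ext_{k-1}(w)$, disjoint from $p$.} Here the variables of $v$ partition as the $X$-variables (together with possibly some $Y$-variables, all going into $q$) and the remaining $Y$-variables going into $p$. Given $\mathbf{t}$ with $t_i\in S_i^{\ast m_i}$ for $i=1,\dots,r$, write $\mathbf{t}=(\mathbf{t}',\mathbf{t}'')$ where $\mathbf{t}''$ feeds $p$ and $\mathbf{t}'$ feeds $q$. By the induction hypothesis applied to $q$ (degree $k-1$), $q(\mathbf{t}')\in w\{\tS\}^{\ast m_1\ldots m_r 2^{k-1}}$. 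Now $v(\mathbf{t})=[q(\mathbf{t}'),p(\mathbf{t}'')]^{-1}=q(\mathbf{t}')^{-1}\,q(\mathbf{t}')^{p(\mathbf{t}'')}$. The key point is that $w\{\tS\}$ is a normal subset of $G$ (each $S_i$ is normal, and a word value of a tuple of elements in normal subsets lies in a normal subset — this is implicit in the setup and is how \cref{width ocw} itself works), so conjugating a product of $N$ elements of $w\{\tS\}^{\pm1}$ by $p(\mathbf{t}'')$ keeps it a product of $N$ such elements. Hence $v(\mathbf{t})$ is a product of at most $2\cdot m_1\ldots m_r 2^{k-1}=m_1\ldots m_r 2^k$ elements of $w\{\tS\}^{\pm1}$, as required. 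This is essentially the argument in \cref{one value in ocw}, with the extra factor of $2$ per degree accounting for the $q^{-1}\cdot q^{\,p}$ expansion.

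\emph{Second case: $w=[\alpha,\beta]$ and $v=[p,q]$ with $p\in\ext_\ell(\alpha)$, $q\in\ext_m(\beta)$, $\ell+m=k$, $p$ and $q$ disjoint.} Write $\tS'=(S_1,\ldots,S_a)$ and $\tS''=(S_{a+1},\ldots,S_r)$ for the splitting of $\tS$ induced by $\alpha=\alpha(x_1,\ldots,x_a)$, $\beta=\beta(x_{a+1},\ldots,x_r)$, and split $\mathbf{t}$ accordingly into the sub-tuples feeding $p$ and $q$. By induction, $p$ evaluates into $\alpha\{\tS'\}^{\ast m_1\ldots m_a 2^\ell}$ and $q$ into $\beta\{\tS''\}^{\ast m_{a+1}\ldots m_r 2^m}$. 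Then, exactly as in the proof of \cref{width ocw}, standard commutator identities expand $[p(\cdot),q(\cdot)]$ as a product of at most $(m_1\ldots m_a 2^\ell)(m_{a+1}\ldots m_r 2^m)=m_1\ldots m_r 2^k$ conjugates of elements of the form $[a,b]^{\pm1}$ with $a\in\alpha\{\tS'\}$, $b\in\beta\{\tS''\}$; since such $[a,b]$ lie in $w\{\tS\}$ and all subsets involved are normal in $G$, the conjugates stay in $w\{\tS\}^{\pm1}$, so $v(\mathbf{t})\in w\{\tS\}^{\ast m_1\ldots m_r 2^k}$.

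\emph{Main obstacle.} The only subtlety — and the step to be careful about — is bookkeeping the variable partition: an extended word $v$ carries both the original $X$-variables and auxiliary $Y$-variables, and one must consistently track which sub-tuple of $\mathbf{t}$ feeds which sub-word, and verify that the bound on the number of factors multiplies correctly ($m_1\ldots m_r$ over the $X$-slots, and $2^\ell\cdot 2^m=2^k$ from the two recursions). The $Y$-variables impose no constraint of the form $t_i\in S_i^{\ast m_i}$, so they contribute nothing to the $m$-product, only to the normal-subset conjugation step; this matches the statement, which only hypothesizes membership constraints on $t_1,\ldots,t_r$. Once the partition is set up correctly, both cases reduce to the two computations already carried out in \cref{one value in ocw} and \cref{width ocw}, so no genuinely new ideas are needed.
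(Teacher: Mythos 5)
Your argument follows the paper's proof almost step for step: base case from \cref{width ocw}, then the two branches of \cref{def extended word}, handling the first by the $q^{-1}q^{p}$ expansion (which is exactly \cref{one value in ocw} applied to $[x_1,x_2]$ and the normal subset $w\{\tS\}^{\ast m_1\ldots m_r2^{k-1}}$) and the second by the commutator expansion of \cref{width ocw}. The bookkeeping of which slots carry the constraints $t_i\in S_i^{\ast m_i}$, and the observation that $w\{\tS\}$ is a normal subset, are both correct.

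There is, however, one formal defect: you induct on the degree $k$ alone, but in your second case you invoke the induction hypothesis for $p\in\ext_{\ell}(\alpha)$ and $q\in\ext_{m}(\beta)$ with $\ell+m=k$, and nothing prevents $\ell=k$ (with $m=0$) or $m=k$. In that situation $p$ is an extended word of the \emph{same} degree $k$ as $v$, so an induction on $k$ does not entitle you to the conclusion for $p$. What saves the argument is that $\alpha$ and $\beta$ involve strictly fewer of the variables $x_1,\ldots,x_r$ than $w$ does, so the correct induction parameter is $k+r$ (degree plus number of $x$-variables), which is precisely what the paper uses. With that one-line repair the proof is complete and coincides with the paper's.
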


\begin{proof}
We use induction on $k+r$.
If $k=0$ then $v=w$ and the result is \cref{width ocw}.
This gives in particular the basis of the induction.
Suppose now that the result holds for smaller values of $k+r$, and that $k\ge 1$.
According to \cref{def extended word}, we may assume that $v(\mathbf{t})=[p(\mathbf{t'}),q(\mathbf{t''})]$, where $p$ and $q$ are disjoint and
\begin{enumerate}
\item
either $p$ is an outer commutator word in $Y$ and $q\in\ext_{k-1}(w)$,
\item
or $p\in\ext_{\ell}(\alpha)$, $q\in\ext_{m}(\beta)$, with $w=[\alpha,\beta]$ and $\ell+m=k$.
\end{enumerate}

In case (i), all elements $t_1,\ldots,t_r$ appear in the vector $\mathbf{t''}$, and by the induction hypothesis we have
$q(\mathbf{t''})\in w\{\tS\}^{\ast m_1\ldots m_r 2^{k-1}}$.
Then the result follows by applying \cref{one value in ocw} to the commutator word $[x_1,x_2]$ and the normal subset $w\{\tS\}^{\ast m_1\ldots m_r 2^{k-1}}$.

Suppose now that we are in case (ii), and assume without loss of generality that $\alpha=\alpha(x_1,\ldots,x_q)$ and
$\beta=\beta(x_{q+1},\ldots,x_r)$.
Set $\mathbf{S'}=(S_1,\ldots,S_q)$ and $\mathbf{S''}=(S_{q+1},\ldots,S_r)$.
Since $\alpha$ and $\beta$ involve less variables than $w$, the result is true for $p$ and $q$,
and so
\[
p(\mathbf{t'})\in \alpha\{\mathbf{S'}\}^{\ast m_1\ldots m_q 2^{\ell}}
\quad
\text{and}
\quad
q(\mathbf{t''})\in \beta\{\mathbf{S''}\}^{\ast m_{q+1}\ldots m_r 2^{m}}.
\]
Now the result follows by applying \cref{width ocw} to the commutator word $[x_1,x_2]$ and the pair of normal subsets
$(\alpha\{\mathbf{S'}\},\beta\{\mathbf{S''}\})$.
\end{proof}

We also need to define a type of extensions of tuples of normal subgroups and of verbal subgroups on normal subgroups.
The idea behind the definition is to be able to deal with tuples like the ones appearing in \eqref{longer tuples} and with the corresponding verbal subgroups on normal subgroups in that paragraph.

\begin{defn}
\label{oc extension tuple}
Let $G$ be a group and let $\tN=(N_1,\ldots,N_r)$ and $\tM=(M_1,\ldots,M_s)$ be two tuples of normal subgroups of $G$.
We say that $\tM$ is an {\em outer commutator extension} of $\tN$ if the following conditions hold:
\begin{enumerate}
\item
$s\ge r$.
\item
For every $i=1,\ldots,s$, we have $M_i=w_i(\tN_i)$, where $w_i$ is an outer commutator word and all components of $\tN_i$ belong to $\tN$.
\item
For every $i=1,\ldots,r$, the subgroup $N_i$ is a component of $\tN_i$, and consequently $M_i\le N_i$.
\end{enumerate}
\end{defn}

\begin{defn}
\label{oc extension verbal}
Let $w=w(x_1,\ldots,x_r)$ be a word and let $\tN$ be an $r$-tuple of normal subgroups of a group $G$.
An {\em extension} of degree $k$ of $w(\tN)$ by outer commutators is a subgroup of the form $v(\tM)$, where $v$ is an extended word of degree $k$ of $w$ and $\tM$ is an outer commutator extension of $\tN$.
\end{defn}

For example, the subgroup in \eqref{second problematic} can be seen as an extension of $\delta_2(N_1,N_2,N_3,N_4)$ by taking
$v=\big[[x_1,x_2],[[y_1,y_2],[x_3,x_4]]\big]$ and $\tM=(N_1,N_2,N_3,[N_3,N_4],N_1,N_2)$.
Note that $v(\tM)$ is linear in the fourth component modulo the subgroup that appears below it in Figure 4.

We now prove the existence of a linear series for derived words.

\begin{thm}
\label{linear series delta}
Let $k\in \N$ and let $\tN=(N_1,\ldots,N_{2^k})$ be a tuple of normal subgroups of a group $G$.
Then there exists a series
\[
[\delta_k(\tN),\delta_k(\tN)] = V_0^k\le V_1^k\le \cdots \le V_t^k = \delta_k(\tN)
\]
of normal subgroups of $G$ of length $t=2^k+2^{k-1}-1$ such that, for every
$i=1,\ldots,t$, the following hold:
\begin{enumerate}
\item
The section $V_i^k/V_{i-1}^k$ is the image of an extension $v_i^k(\tM_i^k)$ of $\delta_k(\tN)$ of degree at most $k-1$.
\item
In the section $V_i^k/V_{i-1}^k$, the word $v_i^k$ is linear in one component of the tuple $\tM_i^k$.
\end{enumerate}
Furthermore, the words $v_i^k$ and the words appearing in the outer commutator extensions $\tM_i^k$ depend only on $k$,
and not on the group $G$ or on the tuple $\tN$.
\end{thm}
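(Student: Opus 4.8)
The plan is to induct on $k$, following the procedure illustrated for $\delta_2$ in the preceding discussion. For $k=1$ we have $\delta_1=\gamma_2$, and \cref{linear series gamma} (applied with $r=2$) gives exactly the required series of length $t=2^1+2^0-1=2$, with the extensions of degree $0$ and linearity in a single component; this is the base case. For the inductive step, suppose we have a linear series for $\delta_{k-1}(N_1,\ldots,N_{2^{k-1}})$ of length $2^{k-1}+2^{k-2}-1$, with all the stated properties and with the words depending only on $k-1$. Write $\delta_k(\tN)=[\delta_{k-1}(\tN'),\delta_{k-1}(\tN'')]$, where $\tN'=(N_1,\ldots,N_{2^{k-1}})$ and $\tN''=(N_{2^{k-1}+1},\ldots,N_{2^k})$, and set $D'=\delta_{k-1}(\tN')$, $D''=\delta_{k-1}(\tN'')$.

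First I would build the ``upper half'' of the series, mimicking Figures 2--3: take the commutator of each term of the linear series for $D'$ with $D''$, then multiply through by the appropriate term dictated by the Three Subgroup Lemma so that the bottom of this partial series becomes $[D',[D',D'']]$ (rather than $[[D',D'],D'']$), exactly as in the passage from Figure before to Figure~2. Using \cref{verbal subgroup on disjoint by steps} and \cref{generators ocw normal subgroups}, each factor of this partial series is the image of a verbal subgroup $[v(\tM),D'']$ where $v(\tM)$ runs over the extensions furnished by the induction hypothesis; applying \cref{verbal subgroup of ocw of other words}-type manipulations one checks that $[v(\tM),D'']=v'(\tM')$ for an extended word $v'$ of $\delta_k$ of degree one more than that of $v$, hence of degree at most $(k-2)+1=k-1$, and for an outer commutator extension $\tM'$ of $\tN$ (one appends the tuple realizing $D''$ as $\delta_{k-1}(\tN'')$, which involves only components of $\tN$; the new word is an outer commutator and no variable is repeated because $\tN'$ and $\tN''$ are disjoint). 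Linearity in the designated component is inherited: if $v$ is linear in position $i$ on $\tM$ modulo the next term, then so is $[v(\tM),D'']$ in the same component modulo the corresponding term, by applying \cref{basic comm congruence} together with the Three Subgroup Lemma computations already carried out in the proof of \cref{linear series gamma}. This yields the top $2^{k-1}$ factors.

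Next I would build the ``lower half'': swap the roles of $\tN'$ and $\tN''$ to get a linear series for $[D',D'']=[\delta_{k-1}(\tN'),\delta_{k-1}(\tN'')]$ whose linear components lie on the $\tN''$ side (Figure~3), then take the commutator of $D'$ with each of its terms and append $\delta_k(\tN)'$ at the bottom (Figure~4). Here the subtlety flagged in the text appears: the resulting factors, such as those in \eqref{first problematic}--\eqref{second problematic}, are \emph{not} of the form $\delta_k(\tM)$, but they are images of $v(\tM)$ for a genuinely different extended word $v$ of $\delta_k$ and an outer commutator extension $\tM$; this is precisely what \cref{def extended word}, \cref{oc extension tuple}, and \cref{oc extension verbal} were set up to accommodate, and is why we phrased conclusion (i) in terms of extensions rather than $\delta_k$ itself. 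The degree bookkeeping gives degree at most $k-1$ again (one extra commutator with $D'$ on top of the $\le k-2$ coming from the series for $[D',D'']$), and \cref{basic comm congruence} plus the Three Subgroup Lemma again propagate linearity down. Gluing the two halves gives a series of total length $2^{k-1}+(2^{k-1}+2^{k-2}-1)=2^k+2^{k-1}-1=t$, as required, and since at every stage the new words are produced from the old ones by fixed syntactic operations, they depend only on $k$.

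**The main obstacle.** The hard part is the lower half: verifying that each factor of the series in Figure~4 really is the image of $v(\tM)$ for an extended word $v$ of $\delta_k$ and an outer commutator extension $\tM$ of $\tN$ — in particular tracking which component is the linear one (it migrates to an interior position, not a top-level argument of $\delta_k$) — and then confirming that the modulus produced when \cref{basic comm congruence} is applied to pass from one factor to the next is actually contained in the next term of the series. As in \eqref{first inclusion modulus}--\eqref{second inclusion modulus}, this requires a careful Three Subgroup Lemma argument identifying commutators of the shape $[\,[\text{lower}],[\text{lower}],D''\,]$ with appropriate terms lying below; doing this uniformly for arbitrary $k$, rather than for the hand-checked case $k=2$, is the technical heart of the proof. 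Everything else — the width and generation lemmas, the disjointness checks, and the degree count — is routine given the machinery already assembled.
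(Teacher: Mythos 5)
Your plan follows essentially the same route as the paper's proof: induct on $k$, build the top part of the series by commutating the inductive series for $\delta_{k-1}(\tN')$ with $\delta_{k-1}(\tN'')$ and correcting via the Three Subgroup Lemma, build the bottom part by commutating $\delta_{k-1}(\tN')$ into the swapped series for $[\delta_{k-1}(\tN'),\delta_{k-1}(\tN'')]$ and appending $\delta_k(\tN)'$, and record each factor as an extension $v(\tM)$ in the sense of the definitions, with linearity propagated by the basic commutator congruence. The one slip is in the length tally: the top half contributes $2^{k-1}+2^{k-2}-1$ factors (the length of the inductive series), not $2^{k-1}$, and the bottom half contributes $2^{k-1}+2^{k-2}$ (one extra for the appended bottom term), which correctly sums to $t=2^k+2^{k-1}-1$, whereas your displayed sum $2^{k-1}+(2^{k-1}+2^{k-2}-1)$ equals $2^k+2^{k-2}-1\ne t$; the construction itself is right, so this is only a bookkeeping error.
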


\begin{proof}
We prove the theorem by induction on $k$.
If $k=1$ then, as already mentioned, it suffices to take the series
\[
V_0^1=\big[[N_1,N_2],[N_1,N_2]\big]\le V_1^1=\big[N_1,[N_1,N_2]\big]\le V_2^1=[N_1,N_2],
\]
so that $v_1^1=v_2^1=\delta_1$, and $M_1^1=(N_1,[N_1,N_2])$ and $M_2^1=(N_1,N_2)$.

Assume now that $k\ge 2$ and that the result is true for $k-1$.
Set $\tN_1=(N_1,\ldots,N_{2^{k-1}})$ and
$\tN_2=(N_{2^{k-1}+1},\ldots,N_{2^{k}})$.
By the induction hypothesis, if we put $s=2^{k-1}+2^{k-2}-1$, then there exist series
\begin{equation}
\label{series(k-1)-1}
V_{0,1}^{k-1}=[\delta_{k-1}(\tN_1),\delta_{k-1}(\tN_1)]
\le
\cdots
\le
V_{i,1}^{k-1}
\le
\cdots
\le
V_{s,1}^{k-1} =\delta_{k-1}(\tN_1)
\end{equation}
and
\begin{equation}
\label{series(k-1)-2}
V_{0,2}^{k-1}=[\delta_{k-1}(\tN_2),\delta_{k-1}(\tN_2)]
\le
\cdots
\le
V_{i,2}^{k-1}
\le
\cdots
\le
V_{s,2}^{k-1} = \delta_{k-1}(\tN_2)
\end{equation}
such that, for every $i=1,\ldots,s$, the factors $V_{i,1}^{k-1}/V_{i-1,1}^{k-1}$ and $V_{i,2}^{k-1}/V_{i-1,2}^{k-1}$ are the images of 
$v_i^{k-1}(\tM_{i,1}^{k-1})$ and $v_i^{k-1}(\tM_{i,2}^{k-1})$, respectively, where:
\begin{enumerate}
\item[(a)]
$v_i^{k-1}$ is an extended word of $\delta_{k-1}$ of degree at most $k-2$.
\item[(b)]
$\tM_{i,1}^{k-1}$ is an outer commutator extension of $\tN_1$.
\item[(c)]
$\tM_{i,2}^{k-1}$ is an outer commutator extension of $\tN_2$.
\item[(d)]
In the sections $V_{i,1}^{k-1}/V_{i-1,1}^{k-1}$ and $V_{i,2}^{k-1}/V_{i-1,2}^{k-1}$, the word $v_i^{k-1}$ is linear in one component
of the tuples $\tM_{i,1}^{k-1}$ and $\tM_{i,2}^{k-1}$, respectively.
\end{enumerate}

Let us now see how to obtain the series for $k$ and for the tuple $\tN$ from the two series \eqref{series(k-1)-1} and \eqref{series(k-1)-2}.
We start by taking the commutator of all terms of the series \eqref{series(k-1)-1} with $\delta_{k-1}(\tN_2)$.
This way we obtain the series
\begin{equation}
\label{series(k-1)-1-comm}
[V_{0,1}^{k-1},\delta_{k-1}(\tN_2)]
\le
\cdots
\le
[V_{i,1}^{k-1},\delta_{k-1}(\tN_2)]
\le
\cdots
\le
[\delta_{k-1}(\tN_1),\delta_{k-1}(\tN_2)]
=
\delta_k(\tN).
\end{equation}
By P.\ Hall's Three Subgroup Lemma, we have
\begin{equation*}
\begin{split}
[V_{0,1}^{k-1},\delta_{k-1}(\tN_2)]
&=
[\delta_{k-1}(\tN_1),\delta_{k-1}(\tN_1),\delta_{k-1}(\tN_2)]
\\
&\le
[\delta_{k-1}(\tN_1),\delta_{k-1}(\tN_2),\delta_{k-1}(\tN_1)]
= 
[\delta_{k-1}(\tN_1),\delta_k(\tN)].
\end{split}
\end{equation*}
Now we multiply all terms of the series \eqref{series(k-1)-1-comm} by
$[\delta_{k-1}(\tN_1),\delta_k(\tN)]$, and this is the rightmost part of the series we are seeking (where $t=2^k+2^{k-1}-1$, as above):
\begin{multline}
\label{seriesk-1}
V_{t-s}^k
=
[\delta_{k-1}(\tN_1),\delta_k(\tN)]
\le
\cdots
\le
V_{t-s+i}^k
=
[V_{i,1}^{k-1},\delta_{k-1}(\tN_2)] \, [\delta_{k-1}(\tN_1),\delta_k(\tN)]
\\
\le
\cdots
\le
V_t^k = \delta_k(\tN).
\end{multline}
Note that $t-s=2^{k-1}+2^{k-2}$.
The factors in this series are the images of the subgroups
\[
[v_i^{k-1}(\tM_{i,1}^{k-1}),\delta_{k-1}(\tN_2)],
\]
which can be represented in the form
$v_i^k(\tM_i^k)$ by taking
\[
v_i^k = [v_i^{k-1},\delta_{k-1}(x_{2^{k-1}+1},\ldots,x_{2^k})]
\]
and defining $\tM_i^k$ to be the concatenation of $\tM_{i,1}^{k-1}$ and $\tN_2$, where the elements of $\tN_2$ occupy the positions $2^{k-1}+1,\ldots,2^k$
(which are the positions corresponding to the variables $x_{2^{k-1}+1},\ldots,x_{2^k}$).
Note that $\tM_i^k$ is an outer commutator extension of $\tN$.

In a symmetric way, by first taking the commutator of $\delta_{k-1}(\tN_1)$ with all terms of the series \eqref{series(k-1)-2} and then multiplying by
$[\delta_k(\tN),\delta_{k-1}(\tN_2)]$, we get the series
\begin{multline}
\label{series-k-2 aux}
W_{t-s}^k
=
[\delta_k(\tN),\delta_{k-1}(\tN_2)]
\le
\cdots
\le
W_{t-s+i}^k
=
[\delta_{k-1}(\tN_1),V_{i,2}^{k-1}] \, [\delta_k(\tN),\delta_{k-1}(\tN_2)]
\\
\le
\cdots
\le
W_t^k = \delta_k(\tN).
\end{multline}
In this series, the factors are given by the images of the subgroups $w_i^k(\mathbf{L}_i^k)$, where
\[
w_i^k = [\delta_{k-1}(y_{2^{k-1}+1},\ldots,y_{2^k}),v_{i,2}^{k-1}],
\]
$v_{i,2}^{k-1}$ being the same word as $v_i^{k-1}$, with $x_1,\ldots,x_{2^{k-1}}$ replaced with $x_{2^{k-1}+1},\ldots,x_{2^k}$,
and $\mathbf{L}_i^k$ being the concatenation of $\tM_{i,2}^{k-1}$ and $\tN_1$, where we put the components of the second tuple after the components of the first.
Note that $w_i^k$ is an extended word of $\delta_{k-1}(x_{2^{k-1}+1},\ldots,x_{2^k})$ of degree at most $k-1$.

Now we take the commutator of $\delta_{k-1}(\tN_1)$ with the terms of the last series, and subtract $s$ to all indices, getting
\begin{multline}
\label{seriesk-2 aux-2}
Z_1^k
=
\big[\delta_{k-1}(\tN_1),[\delta_k(\tN),\delta_{k-1}(\tN_2)]\big]
\le
\cdots
\le
Z_{i+1}^k
=
[\delta_{k-1}(\tN_1),W_{t-s+i}^k]
\\
\le
\cdots
\le
Z_{t-s}^k = [\delta_{k-1}(\tN_1),\delta_k(\tN)],
\end{multline}
since $t-2s=1$.
Finally, we define $V_0^k=[\delta_k(\tN),\delta_k(\tN)]$ and multiply all terms of \eqref{seriesk-2 aux-2} with this subgroup, setting
$V_i^k=Z_i^kV_0^k$ for $i=1,\ldots,t-s$.
Since $V_0^k\le [\delta_{k-1}(\tN_1),\delta_k(\tN)]$, we get the series
\begin{multline}
\label{seriesk-2}
V_0^k = [\delta_k(\tN),\delta_k(\tN)]
\le
\cdots
\le
V_{i+1}^k
=
Z_{i+1}^k [\delta_k(\tN),\delta_k(\tN)]
\\
\le
\cdots
\le
V_{t-s}^k = [\delta_{k-1}(\tN_1),\delta_k(\tN)].
\end{multline}
In this series, the factors $V_i^k/V_{i-1}^k$ for $i=2,\ldots,t-s$ are given by the images of the subgroups $v_i^k(\mathbf{M}_i^k)$, where
\[
v_i^k = [\delta_{k-1}(x_1,\ldots,x_{2^{k-1}}),w_{i+s}^k]
\]
is an extended word of $\delta_k$ of degree at most $k-1$, and $\tM_i^k$ is the concatenation of $\tN_1$ and $\mathbf{L}_{i+s}^k$, with
the components of the second tuple after the components of the first.
On the other hand, the quotient $V_1^k/V_0^k$ is given by the image of $v_1^k(\tM_1^k)$, where
\[
v_1^k = [\delta_{k-1}(x_1,\ldots,x_{2^{k-1}}),[y,\delta_{k-1}(x_{2^{k-1}+1},\ldots,x_{2^k})]]
\]
and $\tM_1^k=(\tN,\delta_k(\tN))$.

Now the concatenation of \eqref{seriesk-1} and \eqref{seriesk-2} is the desired series for $k$ and $\tN$.
The discussion of the previous paragraphs shows that $v_i^k(\tM_i^k)$ is an extension of $\delta_k(\tN)$ for every $i=1,\ldots,t$.
Thus we only need to check linearity of every word $v_i^k$ in one component of the vector $\tM_i^k$.
For $i=t-s+1,\ldots,t$, if $v_i^{k-1}$ is linear in component $j$ of $\tM_{i,1}^{k-1}$ of the initial series \eqref{series(k-1)-1}, then combining this fact with \cref{linearity after commutator}, it follows that $v_i^k$ is linear in the same component of $\tM_i^k$.
For $i=1,\ldots,t-s$, we can use similarly the linearity of the series \eqref{series(k-1)-2}. 
Finally for $i=1$, since $V_0^k=[\delta_k(\tN),\delta_k(\tN)]$, we have linearity in the component corresponding to $y$, which takes values in $\delta_k(\tN)$.
\end{proof}

We can now prove the corresponding version of \cref{gamma_r concise most general} for the words $\delta_k$.

\begin{thm}
\label{delta_k concise most general}
Let $k\in\N$ and let $\tN=(N_1,\ldots,N_{2^k})$ be a tuple of normal subgroups of a group $G$.
Assume that $N_i=\langle S_i \rangle$ for every $i=1,\ldots,2^k$, where:
\begin{enumerate}
\item
$S_i$ is a normal subset of $G$.
\item
There exists $n_i\in\N$ such that all $n_i$th powers of elements of $N_i$ are contained in $S_i$.
\end{enumerate}
If for the tuple $\tS=(S_1,\ldots,S_{2^k})$ the set of values $\delta_k\{\tS\}$ is finite of order $m$, then the subgroup $\delta_k(\tN)$ is also finite and
of $(m,k,n_1,\ldots,n_{2^k})$-bounded order.
\end{thm}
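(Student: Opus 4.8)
The strategy exactly parallels the proof of \cref{gamma_r concise most general}, with the linear series for $\gamma_r$ replaced by the one furnished by \cref{linear series delta}. We argue by reverse induction along the series
\[
[\delta_k(\tN),\delta_k(\tN)] = V_0^k \le V_1^k \le \cdots \le V_t^k = \delta_k(\tN),
\]
showing that each $V_i^k$ is finite of $(m,k,n_1,\ldots,n_{2^k})$-bounded order; since $V_t^k = \delta_k(\tN)$, this gives the theorem. First I would establish the base case: $V_0^k = \delta_k(\tN)'$ is finite of $m$-bounded order by \cref{w(N)' finite}, once we observe (via \cref{generators ocw normal subgroups}) that $\delta_k(\tN)$ is generated by the normal subset $\delta_k\{\tS\}$, which has order $m$. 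Then I would assume $V_{i-1}^k$ is finite of bounded order and prove the same for $V_i^k$.

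For the inductive step, by \cref{linear series delta}(i) the factor $V_i^k/V_{i-1}^k$ is the image of an extension $v_i^k(\tM_i^k)$ of $\delta_k(\tN)$ of degree at most $k-1$, where $\tM_i^k$ is an outer commutator extension of $\tN$. Write $\tM_i^k = (M_1,\ldots,M_s)$ with $M_j = w_j(\tN_j)$ as in \cref{oc extension tuple}; since each component of each $\tN_j$ lies in $\tN$ and each $N_\ell = \langle S_\ell\rangle$ with $S_\ell$ a normal subset, \cref{generators ocw normal subgroups} gives a normal generating set $T_j$ of $M_j$ consisting of values $w_j$ on the appropriate $S_\ell$'s; by \cref{one value in ocw} each such generator lies in $S_{j_0}^{\ast 2^{?}}$ for a suitable component position, with the exponent $(k)$-bounded. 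Applying \cref{generators ocw normal subgroups} once more, $V_i^k/V_{i-1}^k$ is generated by the images of the values $v_i^k(\mathbf{t})$ with $\mathbf{t}$ running over $T_1\times\cdots\times T_s$; since each $T_j$-entry lies in some $S_\ell^{\ast c}$ with $c$ being $k$-bounded, \cref{width extended} shows every such value lies in $\delta_k\{\tS\}^{\ast N}$ for an $(m,k)$-bounded $N$, whence there are only $(m,k)$-boundedly many of them. So the abelian group $V_i^k/V_{i-1}^k$ is generated by $(m,k)$-boundedly many elements.

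It remains to bound the orders of these generators modulo $V_{i-1}^k$. Here I use \cref{linear series delta}(ii): in the section $V_i^k/V_{i-1}^k$, the word $v_i^k$ is linear in some component, say component $j_0$, of $\tM_i^k$; and by condition (iii) of \cref{oc extension tuple} that component satisfies $M_{j_0} \le N_{\ell_0}$ for the corresponding index $\ell_0$, so that $n_{\ell_0}$th powers of elements of $M_{j_0}$ lie in $S_{\ell_0}$ by hypothesis (ii). Fixing a generator $g = v_i^k(t_1,\ldots,t_s)$ with $t_{j_0}$ the distinguished entry, linearity in position $j_0$ gives
\[
g^{\lambda n_{\ell_0}} \equiv v_i^k(t_1,\ldots,t_{j_0-1},t_{j_0}^{\lambda n_{\ell_0}},t_{j_0+1},\ldots,t_s) \pmod{V_{i-1}^k}
\]
for every $\lambda\in\Z$. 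Since $t_{j_0}^{\lambda n_{\ell_0}}\in S_{\ell_0}$ and all other $t_j$ lie in the relevant normal subsets of $\tS$ (or are themselves values of outer commutators on those subsets, so still give $\delta_k$-values on $\tS$ via \cref{width extended}/\cref{one value in ocw}), the right-hand side is a value of $\delta_k$ on $\tS$, hence one of at most $m$ elements; a pigeonhole over $\lambda\in\{0,1,\ldots,m\}$ then forces $g$ to have $(m,n_{\ell_0})$-bounded order modulo $V_{i-1}^k$. Combining with the bounded number of generators, $V_i^k/V_{i-1}^k$ is finite of $(m,k,n_1,\ldots,n_{2^k})$-bounded order, and with the inductive hypothesis on $V_{i-1}^k$ we conclude the same for $V_i^k$.

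I expect the main obstacle to be the bookkeeping in the inductive step: one must carefully track that the value $v_i^k(t_1,\ldots,t_{j_0}^{\lambda n_{\ell_0}},\ldots,t_s)$ really lies in $\delta_k\{\tS\}$ rather than merely in some bounded star-power of it — this is what the hypothesis (iii) in \cref{oc extension tuple} (that the linear component sits inside a single $N_{\ell_0}$) is designed to guarantee, and it must be invoked at precisely the right moment, just as in the $\gamma_r$ case. The number of series steps $t = 2^k + 2^{k-1} - 1$ and all star-power exponents are $k$-bounded, so the iterated bounding does not blow up, but writing the estimates cleanly for a general extended word requires attention.
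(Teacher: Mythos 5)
Your proposal follows essentially the same route as the paper: induction along the linear series of \cref{linear series delta}, with \cref{w(N)' finite} for the base case, \cref{width extended} to bound the number of generators of each factor, and linearity plus a pigeonhole argument to bound their orders. The only imprecision is your claim that $v_i^k(t_1,\ldots,t_{j_0}^{\lambda n_{\ell_0}},\ldots,t_s)$ lies in $\delta_k\{\tS\}$ itself (hence among at most $m$ elements); in fact it only lies in a $k$-boundedly iterated star-power $\delta_k\{\tS\}^{\ast n'}$, but since that set still has $(m,k)$-bounded cardinality the pigeonhole goes through unchanged, which is exactly how the paper handles it via the modified tuple of generating sets.
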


\begin{proof}
Let us consider the series
\[
[\delta_k(\tN),\delta_k(\tN)] = V_0^k\le V_1^k\le \cdots
\le V_t^k = \delta_k(\tN)
\]
of \cref{linear series delta}, where $t=2^k+2^{k-1}-1$.
We prove that every $V_i^k$ is finite of bounded order by induction on $i$.
The result for $i=0$ follows from \cref{w(N)' finite}.
Assume now that $i\ge 1$ and that the result is true for $V_{i-1}^k$.
By \cref{linear series delta}, the section $V_i^k/V_{i-1}^k$ coincides with the image of a subgroup $v_i^k(\tM_i^k)$ that is an extension of $\delta_k(\tN)$ of degree at most $k-1$.

For simplicity, let us write $v$ for $v_i^k$ and $\tM$ for $\tM_i^k$.
Let $\tM=(M_1,\ldots,M_s)$, which is an outer commutator extension of $\tN$.
Hence $s\ge 2^k$ and for every $j=1,\ldots,s$ we have $M_j=w_j(\tN_j)$, where $w_j$ is an outer commutator word, all components in $\tN_j$ belong to $\tN$, and one of these components must be $N_j$ for $j=1,\ldots,2^k$.

Let $T_j=w_j\{\tS_j\}$, where $\tS_j$ is obtained from $\tN_j$ by replacing each subgroup $N_{\ell}$ with its given generating set $S_{\ell}$.
Hence $T_j\subseteq M_j$.
Recall from \cref{linear series delta} that the word $v$ (and hence also the number $s$ of variables of $v$) and the words $w_1,\ldots,w_s$ only depend on $k$,
and not on $G$ or on $\tN$.
From this fact, and since $\tS_j$ consists of normal subsets of $G$, it follows from \cref{one value in ocw} that there is a function
$h:\N\rightarrow\N$ such that $T_j\subseteq S_j^{\ast h(k)}$ for every $j=1,\ldots,2^k$.
If we set $\tT=(T_1,\ldots,T_s)$ then, by \cref{width extended}, we get $v\{\tT\}\subseteq w\{\tS\}^{\ast n}$, where
\[
n = h(k)^{2^k} 2^{k-1}.
\]
Consequently
\begin{equation}
\label{bound v(T)}
|v\{\tT\}| \le (2m+1)^n,
\end{equation}
and $v\{\tT\}$ is finite of $(m,k)$-bounded cardinality.
On the other hand, it follows from \cref{generators ocw normal subgroups} that $v(\tM)$ can be generated by the set of values $v\{\tT\}$.

From \cref{linear series delta}, we know that the word $v$ is linear in some position $j\in\{1,\ldots,s\}$ of the tuple $\tM$ modulo $V_{i-1}^k$.
Since $M_j=w_j(\tN_j)$ is as above, we have $M_j\le N_{\ell}$ for some $\ell\in\{1,\ldots,2^k\}$, and actually $\ell=j$
if $j\in\{1,\ldots,2^k\}$.
Now, from linearity, for every tuple $\mathbf{t}=(t_1,\ldots,t_s)\in \tT$ and every $\lambda\in\Z$, we have
\begin{equation}
\label{power out of v}
v(\mathbf{t})^{\lambda n_{\ell}}
=
v(t_1,\ldots,t_j,\ldots,t_s)^{\lambda n_{\ell}}
\equiv
v(t_1,\ldots,t_j^{\lambda n_{\ell}},\ldots,t_s)
\pmod{V_{i-1}^k}.
\end{equation}
We have $t_j^{\lambda}\in M_j\le N_{\ell}$ and then, by (ii) in the statement of the theorem, $t_j^{\lambda n_{\ell}}\in S_{\ell}$.
So we get
\[
v(t_1,\ldots,t_j^{\lambda n_{\ell}},\ldots,t_s) \in v\{\tT_j\},
\]
where $\tT_j$ is the tuple obtained from $\tT$ after replacing $T_j$ with $S_{\ell}$.
Similarly to \eqref{bound v(T)} and taking into account that $\ell=j$ if $j\in\{1,\ldots,2^k\}$, it follows that the set $v\{\tT_j\}$ is finite of $(m,k)$-bounded cardinality.
Hence there exist $(m,k)$-bounded integers $\lambda$ and $\mu$, with $\lambda\ne\mu$, such that
\[
v(\mathbf{t})^{\lambda n_{\ell}}
\equiv
v(\mathbf{t})^{\mu n_{\ell}}
\pmod{V_{i-1}^k}.
\]
This implies that $v(\mathbf{t})$ has finite order modulo $V_{i-1}^k$, bounded in terms of $m$, $k$ and $n_{\ell}$.

Summarizing, the abelian quotient $V_i^k/V_i^{k-1}$ is the image of the verbal subgroup $v(\tM)$, which is generated by the set $v\{\tT\}$
of $(m,k)$-bounded cardinality, and each element of $v\{\tT\}$ has $(m,k,n_{\ell})$-bounded order.
We conclude that the order of $V_i^k/V_{i-1}^k$ is $(m,k,n_{\ell})$-bounded, which completes the proof.
\end{proof}

Exactly as in the case of lower central words, we obtain Theorems A and B as special cases of this last result.

\begin{cor}
\label{delta_k of non-comm concise}
Let $k\in\N$ and let $u_1,\ldots,u_{2^k}$ be non-commutator words.
Then the word $\delta_k(u_1,\ldots,u_{2^k})$ is boundedly concise.
In particular, $\delta_k(x_1^{n_1},\ldots,x_{2^k}^{n_{2^k}})$ is boundedly concise for all $n_i\in \Z\smallsetminus\{0\}$.
\end{cor}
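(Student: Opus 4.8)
The plan is to mimic exactly the derivation of \cref{gamma_r of non-comm concise} from \cref{gamma_r concise most general}, using \cref{delta_k concise most general} in place of the latter. Set $w=\delta_k(u_1,\ldots,u_{2^k})$ and let $G$ be a group in which $w$ takes finitely many values, say $|w\{G\}|=m$; the goal is to bound $|w(G)|$ by a function of $m$, $k$ and $u_1,\ldots,u_{2^k}$, which gives bounded conciseness.

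First I would identify the verbal subgroup. Since $\delta_k$ is an outer commutator word and the $u_i$ are disjoint, \cref{verbal subgroup of ocw of other words} gives $w(G)=\delta_k(u_1(G),\ldots,u_{2^k}(G))$. Writing $S_i=u_i\{G\}$, we have $u_i(G)=\langle S_i\rangle$, each $S_i$ is a normal subset of $G$, and $w\{G\}=\delta_k\{\tS\}$ with $\tS=(S_1,\ldots,S_{2^k})$; in particular $|\delta_k\{\tS\}|=m$.

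Next I would verify the hypotheses of \cref{delta_k concise most general} for the tuple $\tN=(u_1(G),\ldots,u_{2^k}(G))$ and the generating sets $S_i$. Condition (i) is the normality just noted. For condition (ii): since $u_i$ is a non-commutator word, the exponent sum of some variable is a non-zero integer, and the standard substitution argument (the same one used in the proof of \cref{gamma_r of non-comm concise}) shows that for a suitable $n_i\in\Z\smallsetminus\{0\}$ one has $\{g^{n_i}\mid g\in G\}\subseteq u_i\{G\}=S_i$; in particular all $n_i$th powers of elements of $u_i(G)$ lie in $S_i$. Then \cref{delta_k concise most general} yields that $\delta_k(\tN)=w(G)$ is finite of $(m,k,n_1,\ldots,n_{2^k})$-bounded, hence $(m,k,u_1,\ldots,u_{2^k})$-bounded, order. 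This proves that $w$ is boundedly concise, and the bound comes out directly, without invoking the ultrafilter argument of \cite{Fe-Mo}.

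Finally, for the displayed special case, I would take $u_i=x_i^{n_i}$ with $n_i\in\Z\smallsetminus\{0\}$: these are non-commutator words because the exponent sum of $x_i$ equals $n_i\ne 0$, so the first part applies verbatim. Since every step quotes a result already established, I do not expect a serious obstacle; the only point requiring a little care is the standard fact used for condition (ii) that a non-commutator word captures a fixed non-zero power map on every group, and that was already needed in the lower central case.
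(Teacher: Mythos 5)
Your proposal is correct and is exactly the argument the paper intends: the authors state that \cref{delta_k of non-comm concise} follows from \cref{delta_k concise most general} ``exactly as in the case of lower central words,'' i.e.\ by repeating the proof of \cref{gamma_r of non-comm concise} with \cref{verbal subgroup of ocw of other words} and the substitution $S_i=u_i\{G\}$, which is precisely what you do. No gaps; the only cosmetic point is that one should take $n_i=|$exponent sum$|\in\N$ to match the literal hypothesis (ii) of \cref{delta_k concise most general}, which is harmless since $\{g^{n}\mid g\in G\}=\{g^{-n}\mid g\in G\}$.
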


\begin{cor}
\label{delta_k concise on normal}
Let $k\in\N$ and let $\tN=(N_1,\ldots,N_{2^k})$ be a tuple of normal subgroups of a group $G$.
If $\delta_k\{\tN\}$ is finite of order $m$, then the subgroup $\delta_k(\tN)$ is also finite, of $(m,k)$-bounded order.
\end{cor}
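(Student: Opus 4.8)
The plan is to obtain this corollary as the special case of \cref{delta_k concise most general} in which one takes $S_i=N_i$ for every $i=1,\ldots,2^k$, exactly as \cref{gamma_r concise on normal} was deduced from \cref{gamma_r concise most general} in the lower central case. So the first step is to check that this choice of the $S_i$ satisfies the two hypotheses of \cref{delta_k concise most general}. Condition (i) holds because a normal subgroup of $G$ is in particular a normal subset of $G$, and trivially $N_i=\langle N_i\rangle$. Condition (ii) holds with $n_i=1$, since every element of $N_i$ is its own first power and therefore lies in $S_i=N_i$.

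Next I would observe that with this choice the tuple $\tS=(S_1,\ldots,S_{2^k})$ is literally $\tN$, so that $\delta_k\{\tS\}=\delta_k\{\tN\}$. Thus the standing hypothesis $|\delta_k\{\tN\}|=m$ is precisely the finiteness hypothesis required by \cref{delta_k concise most general}, namely that $\delta_k\{\tS\}$ be finite of order $m$.

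Applying \cref{delta_k concise most general} then yields that $\delta_k(\tN)$ is finite of $(m,k,n_1,\ldots,n_{2^k})$-bounded order; since every $n_i$ has been taken equal to $1$, this bound depends only on $m$ and $k$, which is the assertion. There is no genuine obstacle in this argument: all the content has already been absorbed into \cref{delta_k concise most general} and, ultimately, into the construction of the linear series in \cref{linear series delta}, so the corollary follows at once.
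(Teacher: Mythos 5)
Your proposal is correct and is exactly the paper's argument: take $S_i=N_i$ with $n_i=1$ in \cref{delta_k concise most general}, so the resulting bound depends only on $m$ and $k$. Nothing more is needed.
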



\vskip 0.4 true cm

\begin{center}{\textbf{Acknowledgments}}
\end{center}
Supported by the Spanish Government, grant PID2020-117281GB-I00, partly with FEDER funds, and by the Basque Government, grant IT483-22.
The second author is also supported by a grant FPI-2018 of the Spanish Government.

The authors thank the anonymous referee for helpful suggestions that allowed them to produce an improved version of the paper. 

\vskip 0.4 true cm



\bigskip
\bigskip

{\footnotesize \pn{\bf Gustavo A.\ Fern\'andez-Alcober}\;
\\
{Department of Mathematics}, {University of the Basque Country UPV/EHU,} {Bilbao, Spain}\\
{\tt Email: gustavo.fernandez@ehu.eus}\\

{\footnotesize \pn{\bf Matteo Pintonello}\;
\\
{Department of Mathematics}, {University of the Basque Country UPV/EHU,} {Bilbao, Spain}\\
{\tt Email: matteo.pintonello@ehu.eus}\\

\end{document}